\theoremstyle{plain}
\newtheorem{theorem}{Theorem}[section]
\newtheorem{lemma}[theorem]{Lemma}
\newtheorem{proposition}[theorem]{Proposition}
\newtheorem{definition}[theorem]{Definition}
\newtheorem{question}[theorem]{Question}
\theoremstyle{definition}
\newtheorem{remark}[theorem]{Remark}
\newtheorem{example}[theorem]{Example}
\newcommand{\infinito}[1]{{}^\infty #1}
\newcommand{\ZZ}{\mathbb{Z}}			
\newcommand{\NN}{\mathbb{N}}			
\newcommand{\e}{\varepsilon}
\renewcommand{\d}{\delta}
\newcommand{\pacman}[1]{\tikz[baseline=.1em,scale=.4]{
\draw (-.1,0) -- (-.1,.85) -- (.8,.85) -- (.8,0) -- cycle; 
  \draw [fill=#1] (.45,.425) -- (.7,.575) arc (+25:+335:.375) -- cycle;
  \fill (0.55,0.6) circle (.35mm)
}}
\newcommand{\cinco}[1]{\tikz[baseline=.0001em,scale=.28]{
  \draw (-.35,-.2) -- (-.35,1.05) -- (1.05,1.05) -- (1.05,-0.2) -- cycle;
  \draw (-.1,-.2) -- (-.1,1.05);
  \draw (.85,-.2) -- (.85,1.05); 
  \draw [fill=#1] (0.05,0.05) -- (.05,.5) arc (+180:0:.3) -- (.65,0.05) --
  (.55,.2) -- (.45,0.05) -- (.35,.2) -- (.25,0.05) -- (.15,.2) -- cycle;
    \coordinate (eye) at (360*rand:.03);
    \foreach \x in {.17,.43}{
      \fill[white] (\x,.5) circle[radius=.1];
      \fill[black] (\x,.5) ++(eye) circle[radius=.05];
    }
}}
\newcommand{\uno}{\tikz[baseline=.1em,scale=.4]{ 
\draw (-.1,0) -- (-.1,.85) -- (.8,.85) -- (.8,0) -- cycle; 
\draw (.1,0) -- (.1,.85);
\draw (.6,0) -- (.6,.85);
}}
\newcommand{\cero}{\tikz[baseline=.1em,scale=.4]{ 
\draw (-.1,0) -- (-.1,.85) -- (.8,.85) -- (.8,0) -- cycle; 
}}
\newcommand{\punto}[1]{\tikz[baseline=.1em,scale=.4]{
\draw (-.1,0) -- (-.1,.85) -- (.8,.85) -- (.8,0) -- cycle; 
\draw [fill=#1] (0.4,0.42) circle (3mm)
}}
\newcommand{\ghost}[1]{\tikz[baseline=.1em,scale=.4]{
  \draw (-.1,0) -- (-.1,.85) -- (.8,.85) -- (.8,0) -- cycle; 
  \draw [fill=#1] (0.05,0.05) -- (.05,.5) arc (+180:0:.3) -- (.65,0.05) --
  (.55,.15) -- (.45,0.05) -- (.35,.15) -- (.25,0.05) -- (.15,.15) -- cycle;
    \coordinate (eye) at (360*rand:.03);
    \foreach \x in {.17,.43}{
      \fill[white] (\x,.5) circle[radius=.1];
      \fill[black] (\x,.5) ++(eye) circle[radius=.05];
    }
   \draw [fill=white] (0.12,0.2) -- (0.12,0.3) -- (0.2,0.35) -- (0.28,0.3) -- (0.36,0.35) -- (0.44,0.3) -- (0.52,0.35) -- (0.6,0.3)-- (0.6,0.2) -- (0.52,0.25) -- (0.44,0.2) -- (0.36,0.25) -- (0.28,0.2) -- (0.2,0.25) -- cycle;
}}
\newcommand{\key}[1]{\tikz[baseline=.1em,scale=.4]{
  \draw (-.1,0) -- (-.1,.85) -- (.8,.85) -- (.8,0) -- cycle; 
  \draw [fill=#1] (0.05,0.05) -- (.05,.5) arc (+180:0:.3) -- (.65,0.05) --
  (.55,.15) -- (.45,0.05) -- (.35,.15) -- (.25,0.05) -- (.15,.15) -- cycle;
    \coordinate (eye) at (360*rand:.03);
    \foreach \x in {.17,.43}{
      \fill[white] (\x,.5) circle[radius=.1];
      \fill[black] (\x,.5) ++(eye) circle[radius=.05];
    }
   \draw [fill=white] (0.12,0.2) -- (0.12,0.3) -- (0.2,0.35) -- (0.28,0.3) -- (0.36,0.35) -- (0.44,0.3) -- (0.52,0.35) -- (0.6,0.3)-- (0.6,0.2) -- (0.52,0.25) -- (0.44,0.2) -- (0.36,0.25) -- (0.28,0.2) -- (0.2,0.25) -- cycle;
   \draw [fill=green] (0.1,0.55) arc (+270:0:0.15)  -- (0.45,0.7) -- (0.45,0.8) -- (0.55,0.8) -- (0.55,0.7) -- (0.65,0.7) -- (0.65,0.8) -- (0.75,0.8) -- (0.75,0.55) -- cycle;
}}
\newcommand{\cherry}{\tikz[baseline=.1em,scale=.4]{
  \draw (-.1,0) -- (-.1,.85) -- (.8,.85) -- (.8,0) -- cycle; 
  \draw ( .35, .8 ) -- ( .35, .7 ) -- ( .2, .3 );
  \draw (.35,.7) -- (.5, .3);
  \fill[red] (.15,.3) circle[radius=.2];
  \fill[red] (.55,.3) circle[radius=.2];
  
}}
\newcommand{\banana}{\tikz[baseline=.1em,scale=.4]{
\draw (-.1,-.1) -- (-.1,.75) -- (.8,.75) -- (.8,-.1) -- cycle; 
\draw [fill=black] (.0,.6) -- (.55,.6) -- (.55,.65) -- (.0,.65) -- cycle;
\draw [fill=yellow] (.2,.3) -- (.17,.45) -- (.05,.6) arc (+90:-90:.3) -- (.17,.15) -- (.2,.3) (.2,.6) arc (+90:-90:.3) (.35,.6) arc (+90:-90:.3)  (.5,.6) arc (+90:-90:.3);
  
}}
\title{Mean equicontinuity and mean sensitivity on cellular automata}
\author{Luguis de los Santos Ba\~nos and Felipe Garc\'{\i}a-Ramos}
\date{}	
\begin{document}

	\maketitle
	
		\begin{abstract}

		   We show that a cellular automaton (or shift-endomorphism) on a transitive subshift is either almost equicontinuous or sensitive. On the other hand, we construct a cellular automaton on a full shift (hence a transitive subshift) that is neither almost mean equicontinuous nor mean sensitive.  
		\end{abstract}
	
	\section{Introduction}
Sensitivity to initial conditions (or simply sensitivity) is one of the classical notions of chaos on dynamical systems. It was introduced for topological dynamical systems by Guckenheimer \cite{guckenheimer}. By a \emph{topological dynamical system (shortly TDS)} we mean a pair $(X,T)$ such that $X$ is a compact metric space (with metric $d$) and $T:X\rightarrow X$ is continuous. 	A TDS is \emph{sensitive} if there exists $\varepsilon>0$ such that for every non-empty open set $U\subseteq X$ there exist $x,y\in U$ and $n>0$ such that $d(T^nx,T^ny)>\varepsilon$. A notion of order that contrasts sensitivity is equicontinuity (or Lyapunov stablity); a TDS is equicontinuous if $\{ T^n \}_{n\in \NN}$ is an equicontinuous family. Using sensitivity and equicontinuity one can classify transitive topological dynamical systems (see Definition \ref{def: transitivity}). Akin, Auslander and Berg proved that any transitive TDS is either sensitive or almost equicontinuous \cite{akinauslander} (a generalization of the Auslander-Yorke dichotomy \cite{auslander1980}). 
Nonetheless, this classification has some limitations, because sensitivity is not a very strong form of chaos (for example; every non-finite subshift is sensitive; for cellular automata, equicontinuity is strongly connected to local periodicity \cite{garcia2016limit}). Inspired by the notion of mean equicontinuity (or mean Lyapunov stablility) first studied by Fomin \cite{fomin} and Oxtoby \cite{oxtoby}, the notion of mean sensitivity was introduced \cite{lituye,weakforms}. A TDS is \emph{mean sensitive} if there exists $\varepsilon>0$ such that for every non-empty open set $U$ there exist $x,y\in U$ such that $d(T^nx,T^ny)>\varepsilon$ for any $n$ in a set with density bigger than $\varepsilon$. The key difference is ``how many" $n$'s satisfy the condition. For example, the Sturmian subshift is sensitive but not mean sensitive \cite{weakforms}. Similar to the classic case, one can classify transitive TDSs using the mean notions, that is, a transitive TDS is either mean sensitive or almost mean equicontinuous \cite{lituye,weakforms}. Mean equicontinuity/sensitivity has been studied in other recent papers (for instance \cite{downarowiczglasner,garciajagerye,huang2018bounded,garcia2019dynamical,fuhrmann2018structure} or the survey \cite{lisurveymean}) and it is very related to (measurable) discrete spectrum, properties of the maximal equicontinuous factor and quasicrystals. 

Cellular automata (CA) are dynamical systems defined on full-shifts $A^\ZZ$ (or more generally on subshifts). They have been used to model phenomena that are based on local rules in physics, biology and computer science. The notion of sensitivity in CA has been studied in many papers (for example, \cite{kurka1997languages,martin2007damage,gilman1987classes,blanchardtisseur,sablik2008directional,garcia2016limit}). In particular, Kurka proved that any CA (not necessarily transitive) is either sensitive or almost equicontinuous \cite{kuurka2003topological}. Actually one of the main ingredients of this proof is that the full-shift is transitive (with respect to the shift map). Hence, this statement can be generalized to any shift endomorphism on a transitive subshift (see Proposition \ref{dicsub}). So it is natural to ask if, just like in the transitive topological dynamics case, a similar dichotomy to Kurka's holds for the mean versions on cellular automata (on transitive subshifts). 

In this paper we provide the first examples of the study of mean equicontinuity/sensitivity on CA. Firstly, we construct an almost mean equicontinuous CA that is not almost equicontinuous (Theorem \ref{teorema1}). Secondly, we construct a CA that is neither mean sensitive nor almost mean equicontinuous (Theorem \ref{teorema1}). So Kurka's dichotomy does not hold for the mean notions on cellular automata. In conclusion, cellular automata can be divided in the following four disjoint non-empty classes (Theorem \ref{teorema1} and Theorem \ref{teorema 2}): almost equicontinuous, almost mean equicontinuous but not almost equicontinuous, neither almost mean equicontinuous nor mean sensitive, and mean sensitive.

\textbf{Acknowledgements:} 
The authors would like to thank Rafael Alcaraz Barrera for valuable comments. 
The first author receives support from a CONACyT PhD fellowship, and the second author from the CONACyT Ciencia B\'asica project 287764.

\section{Definitions and preliminaries}

\begin{definition}

Let $S\subseteq \ZZ_{\geq 0}$. We define the \textbf{upper density} of $S$  by
$$\overline{D}(S)=\limsup_{n\rightarrow \infty}\frac{\sharp (S\cap \{ 0,\ldots n-1  \} ) }{n}.$$

\end{definition}

A \textbf{topological dynamical system (TDS)} is a pair $(X,T)$ where $X$ is a compact metric space (with metric $d$) and $T:X\rightarrow X$ is continuous.

Transitivity is a topological form of ergodicity. 
\begin{definition}
\label{def: transitivity}
Let $(X,T)$ be a TDS. We say that $(X,T)$ is \textbf{transitive} if for every pair of non-empty open sets $U$ and $V$ there exists $n>0$ such that $T^{-n}U\cap V\neq \emptyset$.

\end{definition}

\begin{definition}
\begin{enumerate}
\item Given a finite non-singular set $A$ (called an alphabet), we define the \textbf{$A$-full shift} as $A^{\ZZ }$. If $X$ is the $A$-full shift for some finite $A$ we say that $X$ is a \textbf{full shift}. 

\item Given $x\in A^{\ZZ}$, we represent the $i$-th coordinate of $x$ as $x_{i}$. Also, given $i,j\in \ZZ$ with $i<j$, we define the finite word $x_{[i,j]}=x_{i}\dots x_{j}$.

\item We endow any full shift with the metric 
\begin{displaymath}
\begin{array}{rcl}
d(x,y) & = & \left\{ \begin{array}{ccl} 2^{-i} & \text{if} \ x\neq y & \text{where} \ i=\min \{ |j| : x_{j}\neq y_{j}  \} ; \\  
0 & \text{otherwise.} &
\end{array} \right.
\end{array}
\end{displaymath}

This metric generates the same topology as the product topology.
\item For any full shift $A^{\ZZ}$, we define the shift map $\sigma:A^{\ZZ}\rightarrow A^{\ZZ}$ by $\sigma (x)_{i}=x_{i+1}$. The shift map is continuous (with respect to the previously defined metric).

\item We say $X$ is a \textbf{subshift} (or shift space) if $X \subseteq A^{\ZZ}$ is closed and $\sigma$-invariant.

\end{enumerate}
\end{definition}

Typically, cellular automata are defined on a full shift. We give a more general definition. These systems are also known as \emph{shift-endomorphisms} or \emph{sliding block-codes}.  

\begin{definition}\label{defCAShift}
We say that $(X,T)$ is a \textbf{cellular automaton (CA)} if $X$ is a subshift and $T:X\rightarrow X$ is continuous and commutes with $\sigma$, i.e., $\sigma \circ T=T\circ \sigma$. 
\end{definition}

As we mentioned in the introduction, cellular automata can be described using local rules. Note that $Tx_i$ represents the $i$th coordinate of the point $Tx$. 

\begin{theorem}[Curtis-Hedlund-Lyndon]\label{CA}
Let $X$ be a subshift and $T:X\rightarrow X$ a function. Then, $(X,T)$ is a  cellular automaton if and only if there exist integers $m\leq a$ and a (local) function $f:A^{a-m+1}\rightarrow A$ such that for any $x\in X$ and any $i\in \ZZ$
$$Tx_{i}=f(x_{[i+m,i+a]}).$$
\end{theorem}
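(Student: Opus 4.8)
The plan is to prove the two implications separately. The direction ``local rule $\Rightarrow$ cellular automaton'' is a routine verification, whereas the converse carries the content and relies on compactness.

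For the first direction I would assume integers $m\le a$ and a function $f\colon A^{a-m+1}\to A$ with $Tx_i=f(x_{[i+m,i+a]})$ for all $x\in X$ and $i\in\ZZ$, and check two things. Commutation with $\sigma$ is a one-line substitution: since $(\sigma x)_{[i+m,i+a]}=x_{[i+m+1,i+a+1]}$, one gets $(T\sigma x)_i=f(x_{[i+m+1,i+a+1]})=(Tx)_{i+1}=(\sigma Tx)_i$. Continuity follows from locality: if $x_{[-n,n]}=y_{[-n,n]}$ then $x_{[i+m,i+a]}=y_{[i+m,i+a]}$ for all $i$ with $-n-m\le i\le n-a$, so $Tx$ and $Ty$ agree on that window, which grows with $n$; hence $d(Tx,Ty)\to0$ as $d(x,y)\to0$. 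As $T$ is already given as a map into $X$, this shows $(X,T)$ is a cellular automaton.

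For the converse I would start from uniform continuity of $T$, which holds because $X$, being a closed subset of $A^{\ZZ}$, is compact. Thus there is $\delta>0$ with $d(Tx,Ty)<1$ whenever $d(x,y)<\delta$. Since $d(Tx,Ty)<1$ forces $(Tx)_0=(Ty)_0$, and since coincidence of $x,y$ on the coordinates $[-N,N]$ gives $d(x,y)\le 2^{-N-1}$, choosing $N\in\NN$ with $2^{-N}<\delta$ makes $(Tx)_0$ a function of $x_{[-N,N]}$ alone. I would then set $m=-N$, $a=N$, and define $f\colon A^{a-m+1}\to A$ by $f(w)=(Tx)_0$ for any $x\in X$ with $x_{[-N,N]}=w$ (well defined by the previous step), and by an arbitrary fixed letter on words $w$ admitting no such $x$.

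Finally I would verify the sliding-block formula: for $x\in X$ and $i\in\ZZ$, $\sigma$-invariance of $X$ gives $\sigma^i x\in X$ and $(\sigma^i x)_{[-N,N]}=x_{[i-N,i+N]}$, whence
$$f(x_{[i+m,i+a]})=f\big((\sigma^i x)_{[-N,N]}\big)=\big(T\sigma^i x\big)_0=\big(\sigma^i Tx\big)_0=(Tx)_i.$$
The one genuinely delicate step is extracting the finite radius $N$ from uniform continuity, i.e.\ seeing that $(Tx)_0$ is determined by a bounded block of $x$; everything else (well-definedness of $f$, and promoting the zero-coordinate identity to all coordinates via $\sigma T=T\sigma$) is then immediate, and the values of $f$ on non-extendable words are irrelevant.
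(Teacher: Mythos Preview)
The paper does not prove this statement: Theorem~\ref{CA} is simply quoted as the classical Curtis--Hedlund--Lyndon theorem, with no proof given. Your proposal supplies the standard argument and it is correct in all essential points; the uniform-continuity step to extract a finite radius and the use of $\sigma T=T\sigma$ to promote the zero-coordinate identity to all coordinates are exactly the usual moves.
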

 
\subsection{Sensitivity, equicontinuity and dichotomies}

A subset of a topological space is \textbf{residual (or comeagre)} if it is the intersection of a countable number of dense open sets. 
\begin{definition}
Let $(X,T)$ be a TDS and $x\in X$.  

\begin{enumerate}
\item The point $x$ is an \textbf{equicontinuity point} if $$\forall
\e > 0, \ \exists \d>0 \ \text{such that} \ \ \forall y\in B_{\d }(x), \ \forall n\geq 0, 
\ d(T^{n}x,T^{n}y)<\e. $$ The set of equicontinuity points of $(X,T)$ is denoted by $EQ$.

\item  $(X,T)$ is \textbf{equicontinuous} if 
$EQ=X$.

\item  $(X,T)$ is \textbf{almost equicontinuous} if $EQ$
is a residual set.

\item $(X,T)$ is \textbf{sensitive} if there exists $\e >0$ such that for every non-empty open set $U\subseteq X$ there exist $x,y\in U$ and $n\neq 0$ 
      such that $$d(T^{n}x,T^{n}y)> \e.$$
\end{enumerate}
\end{definition}

Sensitivity and almost equicontinuity can be used to classify transitive topological dynamical systems. 
\begin{theorem}[\cite{akinauslander}]
Transitive topological dynamical systems are sensitive if and only if they are not almost equicontinuous. 
\end{theorem}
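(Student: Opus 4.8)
The plan is to prove the logically equivalent statement that a transitive TDS is \emph{not} sensitive if and only if it is almost equicontinuous, treating the two implications separately. The implication ``almost equicontinuous $\Rightarrow$ not sensitive'' requires no transitivity: since $X$ is compact metric, hence a Baire space, a residual set is non-empty, so fix $x_{0}\in EQ$. Given any $\e>0$, applying the definition of equicontinuity point to $x_{0}$ with $\e/2$ produces $\d>0$ such that $d(T^{n}x_{0},T^{n}y)<\e/2$ for every $y\in B_{\d}(x_{0})$ and every $n\ge 0$; then the non-empty open set $U=B_{\d}(x_{0})$ satisfies, by the triangle inequality, $d(T^{n}x,T^{n}y)<\e$ for all $x,y\in U$ and all $n\ge 0$, so $U$ contradicts the sensitivity condition at scale $\e$. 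As $\e$ was arbitrary, $(X,T)$ is not sensitive.

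For the converse, assume $(X,T)$ is transitive and not sensitive; the goal is to show $EQ$ is residual. I would first record two facts. (i) The set $\mathrm{Trans}$ of points with dense forward orbit is residual: for a countable base $\{V_{k}\}_{k\in\NN}$ of non-empty open sets, each $G_{k}:=\bigcup_{n\ge 0}T^{-n}V_{k}$ is open, and it is dense because for any non-empty open $U$ Definition~\ref{def: transitivity} gives $n>0$ with $T^{-n}V_{k}\cap U\neq\emptyset$; hence $\mathrm{Trans}=\bigcap_{k}G_{k}$ is a countable intersection of dense open sets, in particular non-empty. (ii) Negating sensitivity: for every $\e>0$ there is a non-empty open $U_{\e}$ with $d(T^{n}x,T^{n}y)\le\e$ for all $x,y\in U_{\e}$ and all $n\ge 1$.

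The core step is to show $\mathrm{Trans}\subseteq EQ$. Fix a transitive point $z$ and $\e>0$, and pick $U_{\e/2}$ from (ii). Density of the forward orbit of $z$ gives $m\ge 0$ with $T^{m}z\in U_{\e/2}$, and continuity of $T^{m}$ gives an open $W\ni z$ with $T^{m}W\subseteq U_{\e/2}$. For $y\in W$ the points $T^{m}y$ and $T^{m}z$ both lie in $U_{\e/2}$, so (ii) gives $d(T^{m+n}y,T^{m+n}z)\le\e/2$ for every $n\ge 1$, that is $d(T^{k}y,T^{k}z)\le\e/2<\e$ for every $k>m$. The finitely many remaining indices $k=0,1,\dots,m$ are controlled by continuity of $T^{0},\dots,T^{m}$: shrink $W$ to an open $W'\ni z$ on which also $d(T^{k}y,T^{k}z)<\e$ for $k=0,\dots,m$. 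Then $d(T^{k}y,T^{k}z)<\e$ for all $k\ge 0$ and all $y\in W'$, so $z\in EQ$. Hence $EQ\supseteq\mathrm{Trans}$; since $EQ$ is a $G_{\delta}$ set (checked in the usual way) and contains the dense set $\mathrm{Trans}$, it is a countable intersection of dense open sets, i.e.\ residual, so $(X,T)$ is almost equicontinuous.

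The one genuinely delicate point is this core step. Non-sensitivity only constrains pairs of points lying \emph{simultaneously} in $U_{\e}$, which is why one must transport $z$ together with its neighbours into $U_{\e/2}$ by the \emph{same} iterate $T^{m}$; and it constrains only the tail $n\ge 1$, so the finitely many initial iterates genuinely need the separate continuity argument. Apart from this, the proof is a routine use of the Baire category theorem together with the definitions of transitivity, sensitivity and equicontinuity point.
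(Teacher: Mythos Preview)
The paper does not give a proof of this theorem; it is stated with a citation to \cite{akinauslander} and used as background. So there is no ``paper's own proof'' to compare against.

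Your argument is correct and is essentially the classical Akin--Auslander--Berg proof. A couple of minor points worth tightening if you write this up elsewhere: (a) you quietly use that $EQ$ is a $G_\delta$; it is, via $EQ=\bigcap_{m}EQ_{1/m}$ with each $EQ_{1/m}$ open, but it is good practice to state this explicitly rather than ``checked in the usual way''; (b) the paper's definition of sensitivity uses ``$n\neq 0$'' rather than ``$n>0$'', which for a non-invertible $T$ should be read as $n>0$---your negation in (ii) handles this correctly. The key idea you flag---pushing $z$ and a whole neighbourhood into $U_{\e/2}$ by the \emph{same} iterate $T^m$, then cleaning up the finitely many initial steps by continuity---is exactly the point of the proof.
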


A CA satisfies the same dichotomy without assuming transitivity. This result is proved in \cite{kuurka2003topological} for CA on the full shift. Using the same technique we prove the result for CA on transitive subshifts. 

\begin{proposition}
	\label{dicsub}
	Let $(X,\sigma)$ be a transitive subshift and $(X,T)$ a CA. Then, $(X,T)$ is almost equicontinuous if and only if is not sensitive.
\end{proposition}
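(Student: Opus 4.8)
The plan is to follow the argument of \cite{kuurka2003topological}, using transitivity of $(X,\sigma)$ in place of transitivity of the full shift under the shift map. One implication is immediate: if $(X,T)$ is almost equicontinuous then $EQ\neq\emptyset$, and a single equicontinuity point $x$ rules out sensitivity, because on a sufficiently small ball around $x$ every pair of points stays within any prescribed distance along the whole forward $T$-orbit (triangle inequality). If $X$ is finite then $(X,T)$ is equicontinuous and there is nothing to prove, so from now on assume $X$ is infinite.

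For the converse, suppose $(X,T)$ is not sensitive. Negating the definition, for every $k\geq 1$ there is a nonempty open $U_k\subseteq X$ with $d(T^n x,T^n y)\leq 2^{-k}$ for all $x,y\in U_k$ and all $n\geq 0$; equivalently $(T^n x)$ and $(T^n y)$ agree on coordinates $[-k+1,k-1]$ for all such $x,y,n$. Shrinking $U_k$ to a cylinder and, if needed, lengthening its defining word (by reading off the coordinates of an actual point of $X$), we may assume $U_k\supseteq[w]_{[a,b]}$ with $[a,b]\supseteq[-k+1,k-1]$. Since $T$ commutes with $\sigma$, hence $T\sigma^j=\sigma^j T$ for all $j\in\ZZ$, this cylinder becomes a translation-invariant \emph{blocking pattern}: whenever two points of $X$ carry the word $w$ on a common interval of positions $P$, their $T^n$-images agree, for every $n\geq 0$, on a fixed sub-window $W(P)\subseteq P$ of width $2k-1$.

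Next I would use transitivity of $(X,\sigma)$ to propagate stability. Fix $k$ large compared to the radius $s$ of a local rule for $T$ provided by Theorem~\ref{CA}, and let $V$ be an arbitrary nonempty open set; it suffices to meet $V$ with the set
$$O_k:=\{x\in X:\ \exists\delta>0,\ \forall y,z\in B_\delta(x),\ \forall n\geq 0,\ d(T^n y,T^n z)\leq 2^{-k}\},$$
which is readily checked to be open and to satisfy $EQ=\bigcap_{k\geq1}O_k$. Choose a cylinder $[u]_{[c,d]}\subseteq V$ with $[c,d]\supseteq[-(k+1),k+1]$. Applying transitivity of $(X,\sigma)$ twice, inserting suitable powers of $\sigma$ to push the blocking cylinder far to the left and far to the right of $[c,d]$, one obtains a point $\tilde x\in[u]_{[c,d]}$ that also carries the word $w$ on an interval $P_L$ entirely to the left of $c$ and on an interval $P_R$ entirely to the right of $d$. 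Now if $y\in X$ agrees with $\tilde x$ on the whole stretch of coordinates from $P_L$ through $P_R$: the blocking property at $P_L$ and at $P_R$ freezes the windows $W(P_L)$ and $W(P_R)$ along the orbit, and since $T$ propagates information at speed at most $s$ and these windows have width $2k-1\geq s$, an induction on $n$ shows that $T^n y$ and $T^n\tilde x$ agree, for every $n$, on the whole interval spanned by $W(P_L)\cup W(P_R)$ — in particular on $[-(k+1),k+1]$, so $d(T^n y,T^n\tilde x)\leq 2^{-(k+1)}$. Hence any two such $y,z$ satisfy $d(T^n y,T^n z)\leq 2^{-k}$ for all $n$, so $\tilde x\in O_k$ (with $\delta$ chosen small enough that $B_\delta(\tilde x)$ contains only points agreeing with $\tilde x$ from $P_L$ through $P_R$). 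Therefore each $O_k$ is dense and open, $EQ$ is residual, and $(X,T)$ is almost equicontinuous.

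The step I expect to be most delicate is turning mere $\varepsilon$-stability into a genuine blocking wall. A priori, $\varepsilon$-stability of $U_\varepsilon$ only pins down a coordinate window whose width is of order $\log_2(1/\varepsilon)$, so one must take $\varepsilon=2^{-k}$ small enough that this frozen window is wider than the neighborhood of the local rule; only then does the finite-propagation-speed induction actually show that a discrepancy living outside the two walls can never reach the region between them. The second point requiring care is that $X$ is a proper subshift, so the blocking word cannot simply be inserted by hand on either side of $[u]_{[c,d]}$: transitivity of $(X,\sigma)$ is precisely what supplies the required splicing, and it is the only place that hypothesis is used.
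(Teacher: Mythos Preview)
Your proposal is correct and follows essentially the same approach as the paper: extract a blocking word from the failure of sensitivity, use transitivity of $(X,\sigma)$ to place copies of it on both sides of an arbitrary cylinder, and run the finite-propagation (light-cone) induction to conclude that the resulting point lies in $O_k$. The paper packages the argument by defining the residual set $\Omega_k=\{x:\text{$w$ occurs at some position $\leq -k$ and some position $\geq k$}\}$ and showing $\Omega_{k_m}\subseteq EQ_{2^{-m}}$, but this is exactly your density-of-$O_k$ argument in different clothing; your account of the blocking/radius threshold is in fact more explicit than the paper's.
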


\begin{proof}
	$\Rightarrow$: Assume that $(X,T)$ is almost equicontinuous. This means that for every open subset $U\subseteq X$, there exists $x\in U$ such that for all $\e >0$ there exists $\d>0$ such that if $d(x,y)<\d$, then we have that $d(T^{n}x,T^{n}y)<\e$ for all $n\geq 0$.
	
	Let $\e>0$. Observe that (using $\frac{\e}{2}$) there exists $\d >0$ such that for all $y,z\in B_{\d}(x)$ and all $n\geq 0$, we have that 
	\begin{displaymath}
	\begin{array}{rcl}
	d(T^{n}y,T^{n}z)&\leq & d(T^{n}y,T^{n}x) + d(T^{n}x,T^{n}z)\\
	& < & \frac{\e}{2} +\frac{\e}{2} =\e.
	
	\end{array}
	\end{displaymath}
	Therefore, $(X,T)$ is not sensitive.

	$\Leftarrow :$ Assume that $(X,T)$ is not sensitive, that is, for all $\e>0$ there exists an open set $U\subseteq X$ such that for all $x,y\in U$ and for all $n\geq 0$, we have that $d(T^{n}x,T^{n}y)<\e$. Now, since T is uniformly continuous, for $\e=1$, there exists $r\geq 0$ such that if $d(x,y)=2^{-r}$, then $d(Tx,Ty)<1$. This implies that for all $x,y\in X$ such that $x_{[-r,r]}=y_{[-r,r]}$, we have that $Tx_{0}=Ty_{0}$. Hence, for all $m\geq 0$, there exist $d\geq r$ and $w\in A^{2d+1}$ (given by $U$) such that for all $x,y\in X$ with $x_{[-d,d]}=w=y_{[-d,d]}$ and all $n\geq 0$, we have that $$T^{n}x_{[-m,m]}=T^{n}y_{[-m,m]}.$$ Then, there is $p\in \{0,\ldots ,|w|-r\}$ such that for all $x,y\in X$ satisfying $x_{[0,|w|-1]}=w=y_{[0,|w|-1]}$, we have $$T^{n}x_{[p,p+r-1]}=T^{n}y_{[p,p+r-1]}$$ for all $n\in \NN$.

	For every $k\geq 0$ we define the set $$\Omega_{k}=\{ x\in X : \exists i\leq -k ,x_{[i,i+|w|-1 ]}=w \wedge \exists j\geq k ,x_{[j,j+|w|-1 ]}=w \}.$$ The sets $\Omega_{k}$ are clearly open. Furthermore, the transitivity of $(X,T)$ implies $\Omega_{K}$ are non-empty and dense, for every $k\geq 0$. Therefore, $\bigcap_{k\geq 0}\Omega_{k}$ is a residual set.
	 We are going to show that for every $m\geq 0$ there exists $k_{m}\geq 0$ such that 
	 $$\Omega_{k_{m}}\subseteq EQ_{2^{-m}}:=\{ x\in X : \exists \d, \forall y,z\in B_{\d}(x),\forall n\geq 0,d(T^{n}y,T^{n}z)<2^{-m} \}.$$ 
Observe that for all $x,y\in \Omega_{k}$ we have that 
$$T^{n}x_{[i+p, i+p+r-1]}=T^{n}x_{[j+p, j+p+r -1]} \ \text{and} \ T^{n}y_{[i+p, i+p+r -1]} = T^{n}y_{[j+p, j+p+r -1]}.$$ 
If $x_{[i,j+|w|]}=y_{[i,j+|w|]}$, then for all $n\geq 0$ we obtain
 $$T^{n}x_{[i+p,j+p+r-1]}=T^{n}y_{[i+p,j+p+r-1]}.$$ Therefore, for every $m\geq 0$, there exists a $k_{m}\geq 0$ sufficiently large such that  $\Omega_{k_{m}}\subseteq EQ_{2^{-m}}$. Hence, $\bigcap_{k_{m}\geq 0}\Omega_{k_{m}}\subseteq \bigcap_{m\geq 0}EQ_{2^{-m}}$. This makes $\bigcap_{m\geq 0}EQ_{2^{-m}}$ a residual set. Since $EQ=\bigcap_{m\geq 0}EQ_{2^{-m}}$ we conclude that $(X,T)$ is almost equicontinuous. 
	
\end{proof}

A TDS is \textbf{minimal} if every orbit is dense. The Auslander-Yorke dichotomy states that a minimal TDS is either equicontinuous or sensitive \cite{auslander1980}. 
Now, consider the proof of Proposition \ref{dicsub}. Note that if $(X,\sigma)$ is minimal then $\Omega_k=X$. With this observation we obtain the following result. 
\begin{proposition}
	\label{prop:mindich}
Let $(X,\sigma)$ be a minimal subshift and $(X,T)$ a CA. Then, $(X,T)$ is equicontinuous if and only if is not sensitive.
\end{proposition}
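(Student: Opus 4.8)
The plan is to adapt the proof of Proposition~\ref{dicsub} almost verbatim, exploiting the fact that in a minimal subshift every word of the language occurs syndetically in every point, which forces the open sets $\Omega_k$ appearing in that proof to coincide with the whole space $X$. This upgrades the residual set of equicontinuity points obtained there to all of $X$.

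The forward implication is immediate: an equicontinuous system is in particular almost equicontinuous, so by Proposition~\ref{dicsub} it is not sensitive (alternatively, one can just repeat the short triangle-inequality argument from the $\Rightarrow$ direction of that proof).

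For the converse, suppose $(X,T)$ is not sensitive. Following the proof of Proposition~\ref{dicsub}, fix $\e = 2^{-m}$ and extract from the witnessing open set $U$ a word $w$ in the language of $X$ and an index $p$ such that
$$T^{n}x_{[p,p+r-1]}=T^{n}y_{[p,p+r-1]}\qquad\text{for all }n\in\NN$$
whenever $x_{[0,|w|-1]}=w=y_{[0,|w|-1]}$, exactly as there. Now invoke minimality of $(X,\sigma)$: since $w$ occurs in at least one point of $X$, there is $N$ such that $w$ occurs inside every block of length $N$ of every $x\in X$. Hence for every $k\geq 0$ and every $x\in X$ there are $i\leq -k$ and $j\geq k$ with $x_{[i,i+|w|-1]}=w=x_{[j,j+|w|-1]}$; that is, the set $\Omega_k$ defined in the proof of Proposition~\ref{dicsub} equals $X$. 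Running the remainder of that argument unchanged, for each $m$ we obtain $k_m$ with $X=\Omega_{k_m}\subseteq EQ_{2^{-m}}$, so $EQ=\bigcap_{m\geq 0}EQ_{2^{-m}}=X$ and $(X,T)$ is equicontinuous.

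The only point requiring a little care — and the one I would single out as the main obstacle, although it is a standard fact — is the passage from minimality to uniform (syndetic) recurrence of the single word $w$ in all points of $X$ simultaneously: this is exactly where the global, rather than merely residual, conclusion comes from, and it is the place where the hypothesis is used more strongly than mere transitivity. Everything else is copied from Proposition~\ref{dicsub}.
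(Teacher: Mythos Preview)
Your proof is correct and follows precisely the approach sketched in the paper: observe that minimality of $(X,\sigma)$ forces $\Omega_k=X$ for every $k$, and then rerun the argument of Proposition~\ref{dicsub} to conclude $EQ=X$ rather than merely that $EQ$ is residual.
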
 
We will now study the mean versions of equicontinuity and sensitivity (mean equicontinuity is weaker than equicontinuity and sensitivity is weaker than mean sensitivity).
\begin{definition}
Let $(X,T)$ be a TDS and $x\in X$. 
\begin{enumerate}
\item The point $x$ is a \textbf{mean equicontinuity point} 
      if for every $\e >0$, there exists $\d >0$ such that if $d(x,y)<\d$, then 
      $$\limsup_{n\rightarrow \infty} \frac{\sum_{i=0}^{n}d(T^{i}x,T^{i}y)}{n+1}\leq \e .$$
      We denote the set of mean equicontinuty points by $EQ^{M}$.
\item $(X,T)$ is \textbf{mean equicontinuous} if $X=EQ^{M}$.
\item $(X,T)$ is \textbf{almost mean equicontinuous} if $EQ^{M}$ is residual.

\item $(X,T)$ is \textbf{mean sensitive} if there exists $\e >0$ such that 
      for every non-empty open set $U\subseteq X$ there exist $x,y\in U$ such that
      $$\limsup_{n\rightarrow \infty} \frac{\sum_{i=0}^{n}d(T^{i}x,T^{i}y)}{n+1}> \e .$$
\end{enumerate}
\end{definition}
Clearly every almost equicontinuous TDS is almost mean equicontinuous. There exist many almost mean equicontinuous TDSs that are not almost equicontinuous \cite{lituye,garcia2019dynamical}; none of these examples is a CA. We will later construct an almost mean equicontinuous CA that is not almost equicontinuous.

\begin{proposition}\cite[Lemma 5]{weakforms}
	\label{pro sigma}

Let $(X,T)$ be a TDS and $\e >0$. Define

$$EQ^{M}_{\e}=\{x\in X : \exists \ \d >0, \ \forall \ y,z\in B_{\d}(x), \ \limsup_{n\rightarrow \infty}\frac{\sum_{i=0}^{n}d(T^{i}y,T^{i}z)}{n+1}<\e \} .$$
Then, $EQ^{M}_{\e}$ is open and $EQ^{M}=\bigcap_{m>0} EQ^{M}_{\frac{1}{m}}$. Furthermore, $EQ^{M}$ is dense if and only if it is a residual set.

\end{proposition}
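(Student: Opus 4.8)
The plan is to establish the three assertions in turn, each being a short point-set argument; the only computational fact used repeatedly is that for bounded nonnegative sequences $(a_i),(b_i)$ one has $\limsup_n \frac{1}{n+1}\sum_{i=0}^n (a_i+b_i) \leq \limsup_n \frac{1}{n+1}\sum_{i=0}^n a_i + \limsup_n \frac{1}{n+1}\sum_{i=0}^n b_i$, which I will apply with $a_i = d(T^i y, T^i x)$ and $b_i = d(T^i x, T^i z)$ after the triangle inequality $d(T^i y, T^i z)\leq a_i + b_i$. For openness of $EQ^{M}_{\e}$: given $x\in EQ^{M}_{\e}$ with witnessing radius $\d>0$, any $x'$ with $d(x,x')<\d$ satisfies $B_{\d-d(x,x')}(x')\subseteq B_{\d}(x)$, so the defining $\limsup$ bound for $x$ applies verbatim to all $y,z\in B_{\d-d(x,x')}(x')$; hence $x'\in EQ^{M}_{\e}$ with radius $\d-d(x,x')$, and $EQ^{M}_{\e}$ is open.

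Next I prove the identity $EQ^{M}=\bigcap_{m>0}EQ^{M}_{1/m}$. For the inclusion $\subseteq$, fix $x\in EQ^{M}$ and $m>0$; apply the definition of mean equicontinuity of $x$ with tolerance $\frac{1}{3m}$ to obtain $\d>0$, and then for any $y,z\in B_{\d}(x)$ the triangle inequality together with the $\limsup$ fact above bounds the Cesàro-averaged distance of $y,z$ by $\frac{2}{3m}<\frac{1}{m}$, so $x\in EQ^{M}_{1/m}$ (I use $\frac{1}{3m}$ rather than $\frac{1}{2m}$ precisely to keep this last inequality strict). For the inclusion $\supseteq$, fix $x\in\bigcap_{m>0} EQ^{M}_{1/m}$ and $\e>0$; choose $m$ with $\frac{1}{m}\leq\e$, let $\d_m$ be the radius witnessing $x\in EQ^{M}_{1/m}$, and specialise the defining condition to $z=x$: this gives that $d(x,y)<\d_m$ implies $\limsup_n \frac{1}{n+1}\sum_{i=0}^n d(T^i x, T^i y)<\frac{1}{m}\leq\e$, so $x\in EQ^{M}$.

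Finally, for ``dense iff residual'': since $X$ is a compact metric space it is a Baire space, so every residual set is automatically dense, giving one direction. Conversely, if $EQ^{M}$ is dense, then for each $m>0$ the set $EQ^{M}_{1/m}$ is open (first step) and contains the dense set $EQ^{M}$ (second step), hence is dense and open; therefore $EQ^{M}=\bigcap_{m>0}EQ^{M}_{1/m}$ is a countable intersection of dense open sets, i.e. residual. I do not expect a genuine obstacle here; the only points requiring care are the strict-versus-nonstrict inequality bookkeeping in the second step (handled by the $\frac{1}{3m}$ choice) and the invocation of the Baire property of compact metric spaces in the third.
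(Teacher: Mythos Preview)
Your proof is correct; each of the three assertions is handled by a standard point-set argument, and the bookkeeping (strict inequalities, Baire property of compact metric spaces) is done carefully. Note, however, that the paper does not supply its own proof of this proposition: it is simply quoted from \cite[Lemma~5]{weakforms}, so there is no in-paper argument to compare against.
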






The Akin-Auslander-Berg dichotomy can also be stated for the mean versions of equicontinuity/sensitivity. 

\begin{theorem}[Mean Akin-Auslander-Berg dichotomy \cite{lituye,weakforms}]
Transitive topological dynamical systems are mean sensitive  if only if they are not almost mean equicontinuous.
\end{theorem}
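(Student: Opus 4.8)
The plan is to prove the two implications separately; transitivity is needed only for one of them.

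\emph{Almost mean equicontinuous implies not mean sensitive (no transitivity needed).} Assume $EQ^{M}$ is residual, hence nonempty, and fix $\e>0$. I would pick $m$ with $1/m<\e$; by Proposition~\ref{pro sigma}, $EQ^{M}=\bigcap_{k>0}EQ^{M}_{1/k}\subseteq EQ^{M}_{1/m}$, so any $x\in EQ^{M}$ admits $\d>0$ with $\limsup_{n}\frac{1}{n+1}\sum_{i=0}^{n}d(T^{i}y,T^{i}z)<1/m<\e$ for all $y,z\in B_{\d}(x)$. Thus for every $\e>0$ the nonempty open set $B_{\d}(x)$ shows that $(X,T)$ is not mean sensitive; contrapositively this is ``mean sensitive implies not almost mean equicontinuous''. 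For the converse implication, by Proposition~\ref{pro sigma} it suffices to show $EQ^{M}$ is dense, i.e. that every nonempty open $G\subseteq X$ meets $EQ^{M}$.

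\emph{Not mean sensitive implies $EQ^{M}$ dense.} The step I would isolate is local: for every $\e>0$ and every nonempty open $V\subseteq X$ there is a nonempty open $V'\subseteq V$ with $\overline{V'}\subseteq V$ and $\limsup_{n}\frac{1}{n+1}\sum_{i=0}^{n}d(T^{i}x,T^{i}y)\leq\e$ for all $x,y\in V'$; by openness of $V'$ this forces $V'\subseteq EQ^{M}_{\e'}$ for all $\e'>\e$. To prove it: since $(X,T)$ is not mean sensitive there is a nonempty open $U$ on which every pair is $\e$-mean-close; by transitivity (Definition~\ref{def: transitivity}) choose $k>0$ with $T^{-k}U\cap V\neq\emptyset$, pick $w$ in that set, and by continuity of $T^{k}$ choose a nonempty open $V'\subseteq V\cap T^{-k}U$ with $\overline{V'}\subseteq V$. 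Then $T^{k}x,T^{k}y\in U$ for $x,y\in V'$, and since deleting a bounded finite initial segment does not change a Cesàro $\limsup$ we get $\limsup_{n}\frac{1}{n+1}\sum_{i=0}^{n}d(T^{i}x,T^{i}y)=\limsup_{n}\frac{1}{n+1}\sum_{i=0}^{n}d(T^{i}T^{k}x,T^{i}T^{k}y)\leq\e$. Given nonempty open $G$, I would then iterate: set $V_{0}=G$ and apply the local step to $V_{m-1}$ with $\e=\frac{1}{2m}$ to get nonempty open $V_{m}\subseteq V_{m-1}$ with $\overline{V_{m}}\subseteq V_{m-1}$ and $V_{m}\subseteq EQ^{M}_{1/m}$. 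The closed sets $\overline{V_{m}}$ decrease, so by compactness of $X$ there is $x\in\bigcap_{m\geq1}\overline{V_{m}}$; since $x\in\overline{V_{m+1}}\subseteq V_{m}\subseteq EQ^{M}_{1/m}$ for every $m$, Proposition~\ref{pro sigma} gives $x\in\bigcap_{m>0}EQ^{M}_{1/m}=EQ^{M}$, while $x\in\overline{V_{1}}\subseteq G$. Hence $EQ^{M}$ is dense, and the theorem follows.

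\emph{Main obstacle.} The only nonroutine point is the local step. One would like to push $U$ forward into $V$, but $T^{k}$ need not be an open map, so this fails directly. The remedy is to transport backwards instead: combine $T^{-k}U\cap V\neq\emptyset$ with continuity of $T^{k}$ to trap an entire open subset of $V$ inside $T^{-k}U$, and then use that a bounded finite initial segment is invisible to a Cesàro $\limsup$. Everything else — moving between ``dense'' and ``residual'' and assembling the countably many constraints $x\in EQ^{M}_{1/m}$ via a nested compact-set (Baire/Cantor-type) argument — is standard and rests on Proposition~\ref{pro sigma} and compactness of $X$.
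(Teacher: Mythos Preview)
The paper does not give its own proof of this theorem; it is stated with citations to \cite{lituye,weakforms} and used as a black box, so there is nothing in the paper to compare your argument against.

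That said, your proposal is correct and is essentially the standard proof. The easy direction (almost mean equicontinuous $\Rightarrow$ not mean sensitive) is fine as written. For the harder direction, your local step is exactly the right idea: transitivity only guarantees $T^{-k}U\cap V\neq\emptyset$, and since $T^{k}$ need not be open you must pull back rather than push forward; the observation that a fixed finite prefix is invisible to a Ces\`aro $\limsup$ then transfers the $\e$-mean-closeness from $U$ to $V'\subseteq V$. The nested-sets construction together with compactness and Proposition~\ref{pro sigma} then produces a mean equicontinuity point in an arbitrary nonempty open $G$, giving density and hence residuality of $EQ^{M}$. One small cosmetic point: when you invoke ``$\overline{V'}\subseteq V$'' you are implicitly using that a compact metric space is regular, which is of course true; it might be worth a word.
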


In view of the previous results in this section, it is natural to ask if there is a mean version of Theorem \ref{dicsub}. Later on, we show that this question has a negative answer. First, we will give a more concrete characterization of mean equicontinuity on CA. The following proposition uses standard tools used to connect density and averages. 

\begin{proposition}\label{me}
Let $(X,T)$ be a CA and $x\in X$. Then $x$ is a mean equicontinuity point if  and only if for every $m\geq 0$ there exists  $m'\geq 0$ such that for every $y\in B_{2^{-m'}}(x)$, the set $$S_{j}:=\{ i\in \ZZ_{\geq 0} : T^{i}x_{j}\neq T^{i}y_{j} \vee T^{i}x_{-j}\neq T^{i}y_{-j} \}$$ satisfies that $$\overline{D}(S_{j})\leq \frac{1}{2^{m+2}},$$ for all $0\leq j \leq m+1$.
\end{proposition}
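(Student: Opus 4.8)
The plan is to translate the metric condition into a coordinatewise one via the identity $d(u,v)=2^{-i}$ with $i=\min\{|k|:u_k\neq v_k\}$, and then pass between Cesàro averages and upper densities using the elementary inequality $\mathbbm{1}_{\{a\ge c\}}\le a/c$ for $a\ge 0$, $c>0$. For $y\in X$ and $j\ge 0$, write
$$T_{j}(y):=\{\, i\in\ZZ_{\ge 0} : d(T^{i}x,T^{i}y)\ge 2^{-j}\,\}.$$
By definition of the metric, $i\in T_{j}(y)$ iff $T^{i}x$ and $T^{i}y$ disagree at some coordinate $k$ with $|k|\le j$; hence $T_{j}(y)=\bigcup_{l=0}^{j}S_{l}(y)$, where $S_{l}(y)$ is the set in the statement, and in particular $S_{l}(y)\subseteq T_{j}(y)$ whenever $l\le j$. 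I will also use that $d(T^{i}x,T^{i}y)\le 1$ always, that $d(T^{i}x,T^{i}y)<2^{-j}$ when $i\notin T_{j}(y)$, that upper density is subadditive ($\overline{D}(A\cup B)\le\overline{D}(A)+\overline{D}(B)$), and that $\overline{D}(S)=\limsup_{n}\frac{1}{n+1}\sum_{i=0}^{n}\mathbbm{1}_{\{i\in S\}}$.

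For the forward implication, assume $x\in EQ^{M}$ and fix $m\ge 0$. Apply mean equicontinuity with $\e=2^{-(2m+3)}$ to obtain $\d>0$, and choose $m'$ with $2^{-m'}\le\d$, so that $B_{2^{-m'}}(x)\subseteq B_{\d}(x)$. For $y$ in this ball, the pointwise bound $\mathbbm{1}_{\{i\in T_{m+1}(y)\}}\le 2^{m+1}d(T^{i}x,T^{i}y)$ gives, after averaging over $i\in\{0,\dots,n\}$ and taking $\limsup$, that $\overline{D}(T_{m+1}(y))\le 2^{m+1}\e=2^{-(m+2)}$. Since $S_{j}(y)\subseteq T_{m+1}(y)$ for every $0\le j\le m+1$, monotonicity of upper density yields $\overline{D}(S_{j}(y))\le 2^{-(m+2)}$, which is exactly the asserted condition.

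For the converse, assume the combinatorial condition and fix $\e>0$. Since $2^{-m}+(m+1)2^{-(m+2)}\to 0$, pick $m$ with $2^{-m}+(m+1)2^{-(m+2)}\le\e$, let $m'$ be the integer furnished by the hypothesis for this $m$, and set $\d=2^{-m'}$. If $d(x,y)<\d$, then by subadditivity and the hypothesis $\overline{D}(T_{m}(y))\le\sum_{j=0}^{m}\overline{D}(S_{j}(y))\le(m+1)2^{-(m+2)}$, while $d(T^{i}x,T^{i}y)\le 2^{-m}+\mathbbm{1}_{\{i\in T_{m}(y)\}}$ for all $i$. Averaging and taking $\limsup$,
$$\limsup_{n\to\infty}\frac{\sum_{i=0}^{n}d(T^{i}x,T^{i}y)}{n+1}\le 2^{-m}+\overline{D}(T_{m}(y))\le 2^{-m}+(m+1)2^{-(m+2)}\le\e,$$
so $x\in EQ^{M}$. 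The content of the argument is just the dictionary $T_{j}(y)=\bigcup_{l\le j}S_{l}(y)$ together with the two comparisons between $d$ and the indicators of the $T_{j}$'s; the one place deserving a little care is the converse step, where $m$ must be chosen large enough that the crude tail estimate $2^{-m}+(m+1)2^{-(m+2)}$ falls below the prescribed $\e$ before invoking the hypothesis. Note that neither continuity of $T$ nor its commutation with $\sigma$ is needed beyond having the subshift metric available.
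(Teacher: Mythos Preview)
Your proof is correct. Both your argument and the paper's rest on the same dictionary between the subshift metric and coordinatewise disagreements, but the bookkeeping differs. For the forward direction the paper argues by contrapositive, whereas you apply the Markov-type bound $\mathbbm{1}_{\{d\ge 2^{-(m+1)}\}}\le 2^{m+1}d$ directly; your route is shorter and more transparent. For the converse, the paper decomposes the average of $d$ exactly via the layers $C_{n,k}\setminus C_{n,k-1}$ (the set of times at which the first disagreement occurs precisely at distance $k$ from the origin), bounds the tail $k\ge m+2$ by a geometric series, and obtains the clean final estimate $2^{-m}$. You instead use the cruder pointwise inequality $d\le 2^{-m}+\mathbbm{1}_{T_m}$ together with subadditivity of upper density, arriving at $2^{-m}+(m+1)2^{-(m+2)}$; this still tends to zero, so nothing is lost, and you avoid the telescoping computation entirely. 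Your closing remark that neither continuity of $T$ nor commutation with $\sigma$ is actually used is well taken: the proposition is really a statement about Ces\`aro averages in the subshift metric.
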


\begin{proof}
\begin{itemize}
\item[$\Rightarrow$:] Let us assume on the contrary that there exists $m\geq 0$ such that for all $m'\geq 0$ there exists $y\in B_{2^{-m'}}(x)$ such that $\overline{D}(S_{l})> \frac{1}{2^{m}}$ for some $0\leq l \leq m+1 $. Therefore, there exists $(n_{j})_{j\geq 0}\subseteq \ZZ_{\geq 0}$ such that 
$$\lim_{n_{j}\rightarrow \infty }\frac{\sharp (S_{l}\cap \{ 0,\ldots , n_{j}\} )}{n_{j}+1}>2^{-m}.$$
Observe that
$$\lim_{n_{j}\rightarrow \infty }\frac{\sharp (S_{l}\cap \{ 0,\ldots , n_{j}\} )}{n_{j}+1}=\lim_{n_{j}\rightarrow \infty }\frac{\sum_{i\in S_{l}\cap \{ 0,\ldots , n_{j}\} }d(T^{i}x,T^{i}y)}{n_{j}+1}.$$
Then, we obtain that 
$$\lim_{n_{j}\rightarrow \infty }\frac{\sum_{i=0}^{n_{j}}d(T^{i}x,T^{i}y)}{n_{j}+1}>2^{-m}.$$
Hence, 
$$\limsup_{n\rightarrow \infty} \frac{\sum_{i=0}^{n}d(T^{i}x,T^{i}y)}{n+1}>2^{-m}.$$
Therefore, $x$ is not a mean equicontinuity point.

\item[$\Leftarrow$:]Let us define for every $x,y\in A^{\ZZ}$ and every pair of integers $n,k\geq 0$ the set $$C_{n,k}:=\{ 0\leq i\leq n : T^{i}x_{[-k,k]}\neq T^{i}y_{[-k,k]}\}.$$ Observe that 
\begin{enumerate}
\item for every $k\geq 0$ we have that $C_{n,k}\subseteq C_{n,k+1}$;
\item for every $k\geq 0$ we have that 
\small
$$C_{n,k+1}\setminus C_{n,k} =\{ i\in [0,n] : T^{i}x_{[-k,k]}=T^{i}y_{[-k,k]} \wedge T^{i}x_{[-(k+1),k+1]}\neq T^{i}y_{[-(k+1),k+1]} \} .$$
\end{enumerate}

Now, let us assume that for every $m\geq 0$ there exists $m'\geq 0$ such that for every $y\in B_{\frac{1}{2^{m'}}} (x)$, the set $$S_{j}=\{i\geq 0 :  T^{i}x_{j}\neq T^{i}y_{j} \vee T^{i}x_{-j}\neq T^{i}y_{-j} \}$$ satisfies $\overline{D}(S_{j})\leq \frac{1}{2^{m+2}}$, for every $0\leq j\leq m+1$. 
Then,
\begin{displaymath}
\begin{array}{rl}
 & \displaystyle\limsup_{n\rightarrow \infty} \frac{\displaystyle\sum_{i=0}^{n}d(T^{i}x,T^{i}y)}{n+1}     \\
= &\displaystyle\limsup_{n\rightarrow \infty }\frac{\sharp (C_{n,0})+\displaystyle\sum_{i=1}^{\infty}\frac{1}{2^{i}}\sharp (C_{n,i}\setminus C_{n,i-1})}{n+1} \\
= &  \displaystyle\limsup_{n\rightarrow \infty }\frac{ \displaystyle\sum_{i=0}^{m+1}\frac{1}{2^{i}}\sharp (S_{i}\cap [0,n])+\displaystyle\sum_{i=m+2}^{\infty}\frac{1}{2^{i}}\sharp (C_{n,i}\setminus C_{n,i-1})}{n+1} \\
\leq &  \displaystyle\sum_{i=0}^{m+1}\frac{1}{2^{i}}\frac{1}{2^{m+2}}+ \displaystyle\limsup_{n\rightarrow \infty }\frac{\displaystyle\sum_{i=m+2}^{\infty}\frac{1}{2^{i}}\sharp (C_{n,i}\setminus C_{n,i-1})}{n+1}\\
\leq & \displaystyle\sum_{i=0}^{m+1}\frac{1}{2^{i}}\frac{1}{2^{m+2}}+ \displaystyle\sum_{i=m+2}^{\infty}\frac{1}{2^{i}}\\
\leq & \displaystyle\frac{1}{2^{m+1}}+\frac{1}{2^{m+1}} \\
= & \displaystyle\frac{1}{2^{m}}.
\end{array}
\end{displaymath}

\end{itemize}
This implies $x$ is a mean equicontinuity point. 
\end{proof}
Mean equicontinuity of CA should not be confused with equicontinuity with respect to the Besicovitch pseudometric studied in \cite{blanchard1997cellular}.
\section{Example 1: The Pacman CA}
In this section we will construct a CA that is almost mean equicontinuous but not almost equicontinuous. First we will give the formal definition of the CA, then we will give the heuristics of the map so the reader gets intuition and finally we will approach the result using a series of technical lemmas. 
We remind the reader that $Tx_i$ represents the $i$th coordinate of the point $Tx$. 

Let $A=\{ \cero,\uno,\ghost{blue} ,\key{blue},\pacman{yellow},\cinco{blue} \}$.  We define the function $T:A^{\ZZ}\rightarrow A^{\ZZ}$ locally as follows 
{\small
\begin{displaymath}
\begin{array}{ccc}
Tx_{i}& = & \displaystyle\left\{ \begin{array}{ccl}
\cero & if & (x_{i-1}\in \{ \cero,\ghost{blue},\key{blue} \} \wedge [(x_{i}\in \{ \cero , \ghost{blue}, \key{blue} \} \wedge x_{i+1}\in \{ \cero,\uno,\pacman{yellow} \} )\\ 
 & & \vee ( x_{i}=\pacman{yellow} \wedge x_{i+1}\notin \{ \uno,\cinco{blue} \} )]) \\
 & & \vee (x_{i-1}\in \{ \uno,\cinco{blue} \} \wedge [(x_{i}\in \{ \cero,\key{blue} \} \wedge x_{i+1}\in \{ \cero,\uno,\pacman{yellow} \} ) \\ 
 & & \vee (x_{i}=\pacman{yellow} \wedge x_{i+1}\notin \{ \uno,\cinco{blue} \} )]), \\
\uno & if & x_{i}\in \{ \uno, \cinco{blue} \} \wedge x_{i+1}\notin \{ \key{blue},\cinco{blue}\} , \\

\ghost{blue} & if & (x_{i-1}\in \{ \cero,\ghost{blue},\key{blue}  \} \wedge x_{i}\in \{ \cero,\ghost{blue},\key{blue}\} \wedge x_{i+1}\in \{ \ghost{blue},\cinco{blue} \} ) \\ 
 &  & \vee (x_{i-1}\in \{ \uno,\cinco{blue} \}\wedge  x_{i}\in \{ \cero,\key{blue} \} \wedge x_{i+1}=\cinco{blue} ), \\

\key{blue} & if & (x_{i+1}=\key{blue}\wedge [(x_{i-1}\in \{ \cero,\ghost{blue},\key{blue}\} \wedge x_{i}\in \{ \cero,\ghost{blue},\key{blue} \} )\\ 
 & & \vee (x_{i-1}\in \{ \uno,\cinco{blue}\} \wedge x_{i}\in \{ \cero,\key{blue} \} )]) \\
 & & \vee (x_{i-1}=\pacman{yellow} \wedge x_{i}\notin \{ \uno,\cinco{blue} \} \wedge x_{i+1}\in \{ \uno,\cinco{blue} \} ) \\
 & & \vee (x_i=\pacman{yellow} \wedge x_{i+1}\in \{ \uno,\cinco{blue} \}),\\

\pacman{yellow} & if &  (x_{i-1}=\uno \wedge [(x_{i}\in \{ \cero,\key{blue} \} \wedge x_{i+1}=\ghost{blue}) \vee x_{i}=\ghost{blue}]) \\
 & & \vee (x_{i-1}= \pacman{yellow} \wedge x_{i},x_{i+1}\notin \{ \uno,\cinco{blue} \}) \text{, and} \\ 
 
\cinco{blue} & if & x_{i}\in \{ \uno , \cinco{blue} \} \wedge x_{i+1}\in \{ \key{blue},\cinco{blue} \} .\\

\end{array}\right.

\end{array}
\end{displaymath}
}
This CA has memory and anticipation 1.
We will call the members of the alphabet as follows: 
\begin{itemize}
\item \cero \ empty space,
\item \uno \ empty door,
\item \pacman{yellow} \ pacman,
\item \ghost{blue} \ ghost.
\item \key{blue} \ keymaster ghost, and
\item \cinco{blue} \ door with ghost.

\end{itemize}
We will now explain the heuristics of this map so the reader gets intuition on the dynamics. The reader does not need to know the rules of the game \emph{Pacman}. It is only needed to understand that pacmans eat blue ghosts. 
\begin{itemize}
\item A door always stays fixed in the same place (a ghost might cross it); that is,  $x_i\in \{\uno, \cinco{blue}\}$ if and only if $Tx_i\in \{\uno, \cinco{blue}\}$.

\item Pacmans \pacman{yellow} move to right (one position per unit of time) if there is no door; that is, if $x_i=\pacman{yellow}$ and  $x_{i+1},x_{i+2}\notin \{\uno, \cinco{blue}\}$ then $Tx_{i+1}=\pacman{yellow}$.
\item If a pacman encounters a door (on the right) it is transformed into keymaster ghost \key{blue}; that is, if $x_i=\pacman{yellow}$ and $x_{j}\in \{\uno, \cinco{blue}\}$ with $j\in \{i+1,i+2\}$ then $Tx_{j-1}=\key{blue}$.
\item Ghosts (\ghost{blue},\key{blue}) always move to the left (one position per unit of time) if there is no pacman or a door on the left; that is, if $x_{i}=  \ghost{blue} ( x_{i}=\key{blue})$, $x_{i-1}\in \{ \cero , \ghost{blue}, \key{blue} \}$ and $x_{i-2}\in \{ \cero,  \ghost{blue}, \key{blue}\} (x_{i-2}\neq  \pacman{yellow} )  $, then $Tx_{i-1}=\ghost{blue}(Tx_{i-1}=\key{blue})$. 
\item If a ghost or keymaster ghost encounters a pacman (on the left) it will dissapear (get eaten); that is, 
\begin{itemize}

\item if $x_{i}\in \{\ghost{blue}, \key{blue}\}$ and $x_{i-1}=\pacman{yellow}$, then $Tx_{i-1}\notin \{\ghost{blue}, \key{blue}\}$; and 

\item if $x_{i}\in \{\ghost{blue}, \key{blue}\}$, $x_{i-2}=\pacman{yellow}$ and $x_{i-1}\notin \{ \uno ,\cinco{blue} \}$, then $Tx_{i-1}=\pacman{yellow}$. 

\end{itemize}

\item If a ghost \ghost{blue} encounters a door it transforms into a pacman; that is, 
\begin{itemize}
\item if $x_{i}=\ghost{blue}$ and $x_{i-1}\in \{ \uno , \cinco{blue}\}$, then $Tx_{i}=\pacman{yellow}$; and
\item if $x_{i}=\ghost{blue}$, $x_{i-2}=\{ \uno , \cinco{blue}\}$ and $x_{i-1}\notin \{ \uno, \cinco{blue},\pacman{yellow}\}$, then $Tx_{i-1}=\pacman{yellow}$.
\end{itemize}

\item If a keymaster ghost encounters a door he will enter the door, lose its key, and (in the following step) proceed to the left; that is, if $x_{i}=\key{blue}$ and $x_{i-1}\in \{ \uno , \cinco{blue}\}$, then $Tx_{i-1}=\cinco{blue}$ and
\begin{itemize}
\item if $x_{i-3}=\cero$, then $T^{2}x_{i-2}=\ghost{blue}$ and
\item if $x_{i-3}=\pacman{yellow}$, then $T^{2}x_{i-2}=\key{blue}$.
\end{itemize} 

\end{itemize}
When describing a point in $A^\ZZ$ we will use a point (.) to indicate the \emph{zero}th coordinate, for example if $x= \ ^{\infty}\cero.\pacman{yellow} \ \cero^{\infty}$ then $x_0=\pacman{yellow}$ and $x_i=\cero$ for every $i\neq 0$.
We will now provide some examples on how the Pacman CA works. Notice that time flows downward on the diagrams. 

\begin{example}\label{example 2}
	Let $m\geq 2$, $w=\uno \ \cero^{m} \ \uno$. We will show a section of the orbit of $x:=  ^{\infty }\cero .w\key{blue}^{\infty }$. In this example we can observe that the space between two doors is acting like a some sort of ``filter", because many ghosts dissapear. 
	
	\begin{displaymath}
	\begin{array}{cccccccccccc}
\cero &	\uno        & \cero          &\cero          &\cero          &\cero          &\cero        &\uno        &\key{blue}&\key{blue} \\
\cero &	\uno        & \cero          &\cero          & \cero         &\cero          &\cero        &\cinco{blue}&\key{blue}&\key{blue} \\
\cero &	\uno        & \cero          &\cero          &\cero          &\cero          &\ghost{blue}&\cinco{blue}&\key{blue}&\key{blue} \\
\cero &	\uno        & \cero          &\cero          &\cero          & \ghost{blue} &\ghost{blue}&\cinco{blue}&\key{blue}&\key{blue} \\
\cero &	\uno        & \cero          &\cero          &\ghost{blue}  &\ghost{blue}  &\ghost{blue}&\cinco{blue}&\key{blue}&\key{blue} \\
\cero &	\uno        & \cero          &\ghost{blue}  &\ghost{blue}  &\ghost{blue}  &\ghost{blue}&\cinco{blue}&\key{blue}&\key{blue} \\
\cero &	\uno        & \pacman{yellow}&\ghost{blue}  &\ghost{blue}  &\ghost{blue}  &\ghost{blue}&\cinco{blue}&\key{blue}&\key{blue} \\
\cero &	\uno        & \cero          &\pacman{yellow}&\ghost{blue}  &\ghost{blue}  &\ghost{blue}&\cinco{blue}&\key{blue}&\key{blue} \\
\cero &	\uno        & \cero          &\cero          &\pacman{yellow}&\ghost{blue}  &\ghost{blue}&\cinco{blue}&\key{blue}&\key{blue} \\
\cero &	\uno        & \cero          &\cero          &\cero          &\pacman{yellow}&\ghost{blue}&\cinco{blue}&\key{blue}&\key{blue} \\
\cero &	\uno        & \cero          &\cero          &\cero          &\cero          &\key{blue} &\cinco{blue}&\key{blue}&\key{blue} \\
\cero &	\uno        & \cero          &\cero          &\cero          &\key{blue}   &\ghost{blue}&\cinco{blue}&\key{blue}&\key{blue}\\
\cero &	\uno        &\cero          &\cero          &\key{blue}   &\ghost{blue}  &\ghost{blue}&\cinco{blue}&\key{blue}&\key{blue}\\
\cero &	\uno        &\cero          &\key{blue}   &\ghost{blue}  &\ghost{blue}  &\ghost{blue}&\cinco{blue}&\key{blue} &\key{blue}\\
\cero &	\uno        &\key{blue}   &\ghost{blue}  &\ghost{blue}  &\ghost{blue}  &\ghost{blue}&\cinco{blue}&\key{blue}&\key{blue}\\
\cero &	\cinco{blue}&\pacman{yellow}&\ghost{blue}  &\ghost{blue}  &\ghost{blue}  &\ghost{blue}&\cinco{blue}&\key{blue}&\key{blue}  \\
	
	\end{array}
	\end{displaymath}
	
\end{example}

\begin{example}\label{example 1}
Let $w=\uno \ \cero \ \cero \ \ghost{blue} \ \pacman{yellow} \ \cero \ \uno \ \key{blue}$. We show a section of the orbit of $x= \ ^{\infty}\cero.w\cero^{\infty}$.
\begin{displaymath}
\begin{array}{cccccccccccc}
\cero & \uno        & \cero          &\cero          &\ghost{blue}  &\pacman{yellow}&\cero        &\uno        &\key{blue}&\cero \\
\cero & \uno        & \cero          &\ghost{blue}  & \cero         &\cero          &\key{blue} &\cinco{blue}&\cero &\cero \\ 
\cero & \uno        & \pacman{yellow}&\cero          &\cero          &\key{blue}   &\ghost{blue}&\uno      &\cero & \cero\\  
\cero & \uno        & \cero          &\pacman{yellow}&\key{blue}   & \ghost{blue} &\cero        &\uno        &\cero &\cero\\
\cero & \uno        & \cero          &\cero          &\pacman{yellow}&\cero          &  \cero      &\uno        &\cero &\cero\\
\cero & \uno        & \cero          &\cero          &\cero          &\pacman{yellow}&\cero        &\uno        &\cero &\cero \\
\cero & \uno        & \cero          &\cero          &\cero          &\cero          &\key{blue} &\uno &\cero &\cero\\
\cero & \uno        & \cero          &\cero          &\cero          &\key{blue}   &\cero        &\uno &\cero &\cero\\
\cero & \uno        & \cero          &\cero          &\key{blue}   &\cero          &\cero        &\uno &\cero &\cero\\
\cero & \uno        & \cero          &\key{blue}   &\cero          &\cero          &\cero        &\uno &\cero &\cero\\
\cero & \uno        & \key{blue}   &\cero          &\cero          &\cero          &\cero        &\uno &\cero &\cero\\
\cero & \cinco{blue}& \cero          &\cero          &\cero          &\cero          &\cero        &\uno &\cero &\cero
\end{array}
\end{displaymath}
\end{example}

\
We now prove a series of technical lemmas. If there is one \key{blue} to the right of a pattern with empty spaces and empty doors, then  \key{blue}  will ``cross" all the doors eventually. We state this fact formally in the next lemma.

\begin{lemma}\label{palabrafinita01}
Let $m\geq 1$, $w\in \{ \cero ,\uno \}^{m}$ such that $w_{0}=\uno=w_{m-1}$ and $w_{i}=\cero$ for all $0<i<m-1$. Set $x=^{\infty }\cero .w\key{blue} \ \cero^{\infty }$. Then, there exists $N>0$ such that $T^{N}x_{[0,m-1]}=w$.
\end{lemma}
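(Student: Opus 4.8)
The plan is to write down the orbit of $x$ explicitly. At every time the configuration will consist of the two ``doors'' sitting at coordinates $0$ and $m-1$ (each always in state $\uno$, except for a single time step in state $\cinco{blue}$ while a ghost passes through it), the symbol $\cero$ at every other coordinate, and a single ``token'', which is a $\ghost{blue}$, a $\pacman{yellow}$ or a $\key{blue}$ occupying one coordinate (or, for one step, occupying one of the two doors and turning it into $\cinco{blue}$). The whole argument is then to follow this token. I would first record two invariants that are immediate from the local rule $f$: (i) if $x_j\in\{\uno,\cinco{blue}\}$ then $Tx_j\in\{\uno,\cinco{blue}\}$, with $Tx_j=\uno$ unless $x_{j+1}\in\{\key{blue},\cinco{blue}\}$ --- so a coordinate carrying a door keeps carrying a door, and is in state $\cinco{blue}$ only while a ghost enters it from the right; and (ii) a coordinate carrying $\cero$ whose two neighbours are each in $\{\cero,\uno\}$ again carries $\cero$ at the next step --- so, away from the token, $\cero$'s stay $\cero$ and the two doors stay $\uno$. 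Hence it suffices to track the token.

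Now I would follow the token, each transition being a direct (if tedious) check against the clauses of $f$, equivalently against the heuristic bullets listed after the definition of $T$. Take first $m\geq 4$. Starting from the $\key{blue}$ at coordinate $m$: one step turns the door at $m-1$ into $\cinco{blue}$; the next step puts a $\ghost{blue}$ at coordinate $m-2$ and restores the door at $m-1$ to $\uno$. The $\ghost{blue}$ then drifts one coordinate left per step until it is two coordinates to the right of the door at $0$, where it becomes a $\pacman{yellow}$ at coordinate $1$; the $\pacman{yellow}$ drifts one coordinate right per step until it is two coordinates to the left of the door at $m-1$, where it becomes a $\key{blue}$ at coordinate $m-2$; this $\key{blue}$ drifts left one coordinate per step --- its leftward motion rule, unlike that of a plain ghost, does not react to a door at distance $2$ --- until it sits at coordinate $1$; one step turns the door at $0$ into $\cinco{blue}$ (and clears coordinate $1$ to $\cero$), and one further step puts a $\ghost{blue}$ at coordinate $-1$ with the door at $0$ restored to $\uno$. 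From then on the token is a $\ghost{blue}$ at coordinate $-1$ surrounded by $\cero$'s, so it drifts left forever and never re-enters $[0,m-1]$.

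Let $N$ be this last time, i.e.\ the first time at which the token has crossed the door at $0$. By the two invariants, at time $N$ the coordinates $1,\dots,m-2$ all carry $\cero$, coordinates $0$ and $m-1$ carry $\uno$, and the token sits at coordinate $-1$; hence $T^N x_{[0,m-1]}=\uno\,\cero^{m-2}\,\uno=w$. The cases $m\in\{1,2,3\}$ must be done by hand, because then the region strictly between the doors has at most one coordinate and the ``generic'' phases above telescope: e.g.\ for $m=2$ the $\ghost{blue}$ that would emerge to the left of the door at $m-1$ lands on the door at $0$ and is absorbed into it at once, and for $m=1$ there is a single door which the $\key{blue}$ enters and leaves in two steps, giving $N=2$. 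Each of these is a short direct computation (three small space--time diagrams), and in each one checks directly that $T^N x_{[0,m-1]}=w$.

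The only real obstacle is bookkeeping: the local rule $f$ has many clauses, and one has to verify genuinely carefully that at each phase transition --- entering a door, $\ghost{blue}\to\pacman{yellow}$ at the left door, $\pacman{yellow}\to\key{blue}$ at the right door, and the two--step crossing of the left door by the $\key{blue}$ --- the output of $f$ is exactly what the heuristics predict, and that no spurious symbol is produced on the interior coordinates or on the doors; together with the separate explicit verification of the small values of $m$.
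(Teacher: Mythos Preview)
Your proposal is correct and follows essentially the same approach as the paper: both arguments track the single ``token'' (successively $\key{blue}\to\cinco{blue}\to\ghost{blue}\to\pacman{yellow}\to\key{blue}\to\cinco{blue}$) across the interval $[0,m-1]$ while noting that all other coordinates stay in $\{\cero,\uno\}$, and both defer the small cases $m\leq 3$ to direct computation. The only cosmetic difference is that the paper records the exact time indices (obtaining $N=3m-5$ for $m\geq 4$), whereas you give a narrative description and take $N$ to be the first time the token has exited through the left door; these coincide.
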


\begin{proof}
 Assume that $m\geq 4$. From the definition of $T$ we have the followings implications: 
\begin{itemize}
\item $Tx_{m-1}=\cinco{blue}$ $\wedge$  $Tx_{i}\in \{ \cero , \uno \}$ $\forall$ $i\neq m-1$, 
\item $T^{m-j}x_{j}=\ghost{blue}$ for $1<j<m-1$ $\wedge$ $T^{m-j}x_{i}\in \{ \cero , \uno \}$ $\forall$ $i\neq j$,
\item $T^{m-2+j}x_{j}=\pacman{yellow}$ for $1 \leq j <m-2$ $\wedge$ $T^{m-2+j}x_{i}\in \{ \cero , \uno \}$ $\forall$ $i\neq j$,
\item $T^{3m-6-j}x_{j}=\key{blue}$ for $1\leq j\leq m-2$ $\wedge$ $T^{3m-6-j}x_{i}\in \{ \cero , \uno \}$ $\forall$ $i\neq j$,
\item $T^{3m-6}x_{0}=\cinco{blue}$  $\wedge$ $T^{3m-6}x_{i}\in \{ \cero , \uno \}$ $\forall$ $i\neq 0$.
\end{itemize}
Then, for $N=3m-5$, we have that $T^{N}x_{[0,m-1]}=w$.
The case when $1\leq m\leq 3$ is easy to check.
\end{proof}

\begin{remark} \label{Remark1}
	Note that if $x_i=\uno$ and $x_i+1=\cero$ then $Tx_i=\uno$.
\end{remark}
 Using Remark \ref{Remark1} and Lemma \ref{palabrafinita01} we obtain the following result. 
\begin{lemma}\label{lemma3}
Let $m>0$, $w\in A^{m}$  and $x=^{\infty }\cero .w\uno \ \cero^{\infty}$. There exists $N>0$ such that for all $n\geq N$,
\begin{center}
$T^{n}x_{i}\in \{ \cero,\uno \} \ \forall \  i\geq 0.$
\end{center}
\end{lemma}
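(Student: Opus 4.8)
My plan is to exploit that, with \(\cero^{\infty}\) to the right of \(m\), information can only travel leftwards: I would fix a permanent wall at coordinate \(m\), cut \([0,m)\) into rooms delimited by the (stationary) door cells, and then clean the rooms one at a time, from right to left. First I would establish the wall. By induction on \(n\), using that the local rule maps \((\ast,\uno,\cero)\mapsto\uno\) (Remark \ref{Remark1}), \((\uno,\cero,\cero)\mapsto\cero\) and \((\cero,\cero,\cero)\mapsto\cero\), one gets \(T^{n}x_{m}=\uno\) and \(T^{n}x_{i}=\cero\) for every \(i>m\) and every \(n\). In particular coordinate \(m\) is a door that is \emph{never} in state \cinco{blue}: a cell can become \cinco{blue} only when its right neighbour was previously \cinco{blue} or \key{blue}, and the right neighbour of \(m\) is always \cero. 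So the rightmost door never emits a ghost, and nothing ever reaches the cells to the right of \(m\).

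\emph{Rooms and transport.} Inspecting the rule shows \(Tx_{i}\in\{\uno,\cinco{blue}\}\) iff \(x_{i}\in\{\uno,\cinco{blue}\}\), so the set \(D=\{q_{1}<\dots<q_{d}=m\}\) of door coordinates in \([0,m]\) does not change with time; \(D\) cuts \([0,m)\) into the bounded rooms \(R_{j}=\{q_{j}+1,\dots,q_{j+1}-1\}\), \(1\le j\le d-1\), and the left half-line \(R_{0}=(-\infty,q_{1})\). From the heuristics (all readable off the rule) I would record the transport facts: inside a room a \pacman{yellow} drifts right and, at the right door, reflects as a \key{blue}; a \ghost{blue} drifts left and, at the left door, reflects as a \pacman{yellow}; a \key{blue} drifts left and, at the left door, crosses it (the door being \cinco{blue} for one step) and re-emerges one cell further left, now inside the next room, as a \ghost{blue}; and a \pacman{yellow} eats any \ghost{blue} or \key{blue} sitting immediately to its right. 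Consequences: between rooms particles travel only leftwards; a particle turns into a \key{blue} at most once inside each room it visits; and inside a bounded room a particle that is not eaten crosses the left door within, say, \(4m\) steps.

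\emph{Cleaning the rooms from right to left.} I would prove, by decreasing induction on \(j=d-1,\dots,1\), that only finitely many particles are ever present in \(R_{j}\) and that there is a time past which \(R_{j}\) is entirely \cero{} while the doors \(q_{j},q_{j+1},\dots,q_{d}\) are all \uno. For \(j=d-1\) the only source of particles in \(R_{d-1}\) is the initial word, since \(q_{d}=m\) never emits and no particle crosses \(q_{d-1}\) rightwards; so there are at most \(|w|\) of them, each is eaten or crosses \(q_{d-1}\) within \(4m\) steps, and thereafter \(R_{d-1}\) stays empty, whence \(x_{q_{d-1}+1}\) is never again \key{blue} and \(q_{d-1}\) relaxes to \uno{} for good. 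For the step, the particles that ever enter \(R_{j}\) are its initial ones together with the keymasters crossing \(q_{j+1}\) out of \(R_{j+1}\): by the induction hypothesis only finitely many particles ever sat in \(R_{j+1}\), each became a \key{blue} at most once there, so only finitely many cross \(q_{j+1}\); and, crucially, the ghosts that \(q_{j+1}\) emits into \(R_{j}\) while it is \cinco{blue} are exactly those re-emerging keymasters, so no \emph{new} particle appears — here the Step-1 observation (the right neighbour being the only possible cause of a \cinco{blue} state) is used, now inductively. Hence finitely many particles ever lie in \(R_{j}\); once \(R_{j+1}\) has settled (its particles having all crossed over or been eaten), \(R_{j}\) receives nothing more, so it empties within \(4m\) further steps and then \(q_{j}\) relaxes to \uno. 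When \(j=1\) is reached every \(R_{j}\) with \(j\ge1\) is entirely \cero{} and every door \(q_{1},\dots,q_{d}\) is \uno; the particles that had spilled into \(R_{0}\) drift left, leave \([0,\infty)\) within \(2m\) more steps, and never return. Thus \(T^{n}x_{i}\in\{\cero,\uno\}\) for all \(i\ge0\) once \(n\) is past some \(N=O(m^{2})\).

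The main obstacle will be making the last two steps airtight against the (long) local rule: verifying the transport, reflection and eating behaviour case by case; pinning down the precise travel-time bound; and, above all, handling the degenerate rooms, i.e.\ runs of adjacent door cells, through which a \key{blue} crosses several doors at once and a \cinco{blue}\ ``pulse'' propagates over the empty rooms between them. One must check that this, too, produces no new particle — the only point where the clean slogan ``information flows only leftwards, and the right wall emits nothing'' needs real care.
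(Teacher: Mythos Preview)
Your proposal is correct and follows the same conceptual line as the paper: establish via Remark~\ref{Remark1} that coordinate $m$ is a permanent \uno\ (hence never \cinco{blue}, hence emits nothing), and then argue that the finitely many particles trapped in $[0,m)$ can only migrate leftward through the fixed doors and eventually vacate $[0,\infty)$. The paper's own proof is literally the one sentence ``Using Remark~\ref{Remark1} and Lemma~\ref{palabrafinita01} we obtain the following result,'' so your explicit room-by-room induction (and your honest flagging of the degenerate adjacent-door case) is far more detailed than what the paper writes, but it is precisely the argument the paper's citation is gesturing at.
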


In the proof of Lemma \ref{palabrafinita01} we describe the ``trajectory" of \key{blue} from the start until it crosses the doors. In the follwing lemma we describe a similar trajectory, but this time we are going to do it backwards in time.

\begin{lemma}\label{trayectoria}
Let $m\geq 2$, $v\in A^{\NN}$ and $x:= \infinito \cero . \uno \ \cero^{m} \ \uno v$. If $N\geq 3m$ and $T^{N}x_{0}=\cinco{blue}$, then
\begin{itemize}
\item $T^{N-3m+1}x_{m+1}=\cinco{blue}$, 
\item $T^{N-2(m-1)-j}x_{j}=\ghost{blue}$ for $2\leq j\leq m$,
\item $T^{N-m-j}x_{m-j}=\pacman{yellow}$ for $1 \leq j \leq m-1$, and
\item $T^{N-j}x_{j}=\key{blue}$ for $1\leq j\leq m$.
\end{itemize}
\end{lemma}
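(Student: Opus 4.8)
The plan is to reconstruct, backwards in time, the single moving ``particle'' whose arrival at the left door at time $N$ produces the $\cinco{blue}$; this is exactly the time-reversal of the space-time trajectory exhibited in the proof of Lemma~\ref{palabrafinita01}. The one structural ingredient I would use throughout is the rigidity of doors recorded among the heuristics: $Ty_{i}\in\{\uno,\cinco{blue}\}$ if and only if $y_{i}\in\{\uno,\cinco{blue}\}$, for every configuration $y$ and every $i$ (the only clauses of the local rule whose output lies in $\{\uno,\cinco{blue}\}$ are the two defining $\uno$ and $\cinco{blue}$, each has $x_{i}\in\{\uno,\cinco{blue}\}$ among its hypotheses, and every configuration with $x_{i}\in\{\uno,\cinco{blue}\}$ meets the hypotheses of one of them). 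For our $x$ this says that positions $0$ and $m+1$ carry a door at every time $n\ge 0$, whereas positions $1,\dots,m$ never do; in each of the backward steps below, this rigid door pattern will discard all but one clause of the local rule producing the symbol in question.

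First I would trace the keymaster from position $1$ up to position $m$. From $T^{N}x_{0}=\cinco{blue}$ and the unique clause producing $\cinco{blue}$ one gets $T^{N-1}x_{1}\in\{\key{blue},\cinco{blue}\}$, hence $T^{N-1}x_{1}=\key{blue}$ since position $1$ is not a door; then induction on $j=1,\dots,m-1$ finishes, because two of the three clauses outputting $\key{blue}$ at a position $j$ require position $j+1$ to be a door, which it is not for $j+1\le m$, so the third fires and forces $T^{N-j-1}x_{j+1}=\key{blue}$. This yields $T^{N-j}x_{j}=\key{blue}$ for $1\le j\le m$. Next I would examine how the keymaster at position $m$ arose: now $m+1$ \emph{is} a door, so the clause requiring $T^{N-m-1}x_{m+1}=\key{blue}$ is out, and the clause requiring $T^{N-m-1}x_{m}=\pacman{yellow}$ is out too, since a $\pacman{yellow}$ at position $m$ could only have been produced had position $m-1$ held a $\uno$, or position $m+1$ held a non-door symbol, one step earlier, neither being possible. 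Hence the remaining clause yields $T^{N-m-1}x_{m-1}=\pacman{yellow}$. Finally I would trace the pacman down to position $1$ by induction on $j=1,\dots,m-2$: one of the two clauses producing $\pacman{yellow}$ at position $m-j$ requires $T^{N-m-j-1}x_{m-j-1}=\uno$, impossible as $m-j-1\in\{1,\dots,m-2\}$ is not a door, and the other collapses to $T^{N-m-j-1}x_{m-j-1}=\pacman{yellow}$ (its remaining hypotheses concern the non-doors $m-j$ and $m-j+1$). This gives $T^{N-m-j}x_{m-j}=\pacman{yellow}$ for $1\le j\le m-1$.

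For the ghost I would start from the birth of the pacman at position $1$ at time $N-2m+1$: the $\pacman{yellow}$-clause whose hypothesis asks for a $\pacman{yellow}$ at position $0$ is out (position $0$ is a door), so the other forces $T^{N-2m}x_{0}=\uno$ together with $T^{N-2m}x_{2}=\ghost{blue}$ or $T^{N-2m}x_{1}=\ghost{blue}$; the latter is impossible, because a $\ghost{blue}$ at position $1$ could only have arisen had position $0$ held one of $\cero,\ghost{blue},\key{blue}$, or position $2$ held a $\cinco{blue}$, one step earlier, and position $0$ is a door while position $2$ is not. Hence $T^{N-2m}x_{2}=\ghost{blue}$. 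Since every clause producing $\ghost{blue}$ at a position $i$ requires $x_{i+1}\in\{\ghost{blue},\cinco{blue}\}$, induction on $j=2,\dots,m-1$ (using that position $j+1\le m$ is not a door) gives $T^{N-2(m-1)-j}x_{j}=\ghost{blue}$ for $2\le j\le m$; applying the same observation once more to $T^{N-3m+2}x_{m}=\ghost{blue}$, now with $m+1$ a door, yields $T^{N-3m+1}x_{m+1}=\cinco{blue}$, which is the last assertion. The hypothesis $N\ge 3m$ is used precisely to keep every time appearing --- down to $N-3m+1$, and the auxiliary times $N-m-2$ and $N-2m-1$ in the two birth arguments --- nonnegative.

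I expect the main obstacle to be exactly those two ``birth'' steps, namely the keymaster arising from a pacman at position $m$ and the pacman arising from a ghost at position $1$: at these junctions the rigid door pattern by itself leaves two surviving clauses rather than one, so one must peel off one further backward step to eliminate the parasitic alternative. That argument, together with the routine but somewhat lengthy bookkeeping of which clause of the long local rule fires at each step, is where the real care is needed.
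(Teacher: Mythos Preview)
Your proposal is correct and follows exactly the approach the paper takes: the paper's own proof consists of the single sentence ``By checking the rules of $T$ one can see that if $T^{N}x_{0}=\cinco{blue}$ and $x_1\neq \uno$ then necessarily $T^{N-1}x_{1}=\key{blue}$; we can go back step by step to obtain the result,'' and you have simply carried out that backward step-by-step trace in full, including the two delicate birth steps the paper glosses over.
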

\begin{proof}
	Assume the hypothesis of the lemma. By checking the rules of $T$ one can see that if $T^{N}x_{0}=\cinco{blue}$ and $x_1\neq \uno$ then necessarily $T^{N-1}x_{1}=\key{blue}$. We can go back step by step to obtain the result.  
\end{proof}

Using Lemma \ref{trayectoria} we will se that if $T^{N}x_{0}=\cinco{blue}=T^{N'}x_{0}$, then $N$ and $N'$ cannot be near.

\begin{lemma}\label{Lcontradiccion}

Let $m\geq 0$, $v\in A^{\NN}$, and $x:= \infinito\cero . \uno \ \cero^{m}\uno v$. If $N' >N\geq 3m$ are such that $T^{N}x_{0}=\cinco{blue}=T^{N'}x_{0}$, then:
\begin{itemize}
\item if $0\leq m\leq 1$, then $ N'-N >2m$; and
\item if $m\geq 2$, then $ N'-N \geq 2m-1$.
\end{itemize}
\end{lemma}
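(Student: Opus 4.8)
The plan is to leverage Lemma~\ref{trayectoria}, which describes the backward trajectory of a \cinco{blue} appearing at the origin, and derive a contradiction from the assumption that two such events happen at times $N$ and $N'$ that are too close together. First I would dispose of the small cases $m=0$ and $m=1$ separately, since Lemma~\ref{trayectoria} requires $m\geq 2$; for these one checks directly from the local rule that a \cinco{blue} at position $0$ forces a \key{blue} at position $1$ one step earlier (when $x_1\neq\uno$, i.e.\ in the relevant configuration), and then one tracks the short trajectory by hand to see that the gap between two \cinco{blue}-at-origin events must exceed $2m$.

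For the main case $m\geq 2$, suppose toward a contradiction that $N'>N\geq 3m$, that $T^{N}x_0=\cinco{blue}=T^{N'}x_0$, and that $N'-N\leq 2m-2$. Apply Lemma~\ref{trayectoria} at time $N$ and at time $N'$. From the instance at $N'$ we get, among other things, $T^{N'-j}x_j=\key{blue}$ for $1\leq j\leq m$ and $T^{N'-m-j}x_{m-j}=\pacman{yellow}$ for $1\leq j\leq m-1$; from the instance at $N$ we get $T^{N-j}x_j=\key{blue}$ for $1\leq j\leq m$ and the corresponding \ghost{blue} and \pacman{yellow} statements. The idea is that the two trajectories, being ``close'' in time, must overlap in the spacetime region $[0,m]\times[\text{roughly }N,N']$, and at some cell the first trajectory prescribes one symbol while the second prescribes an incompatible one. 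Concretely, I would look for an index $j$ and a time $t$ with $t=N'-j$ (so $T^t x_j=\key{blue}$ from the $N'$-trajectory) while simultaneously $t=N-j'$ for some $j'$ with $T^t x_{j'}$ belonging to the $N$-trajectory and forcing, via the local rule and the geometry, a symbol at $x_j$ that is not \key{blue} --- the natural candidate being a \pacman{yellow} or \ghost{blue} cell from the earlier trajectory sitting where the later one demands \key{blue}. Since $N'-N\leq 2m-2 < 2m-1$, the \pacman{yellow}-phase of the $N$-trajectory (which occupies times $N-m-j$ for $1\leq j\leq m-1$, i.e.\ times from $N-2m+1$ to $N-m-1$) overlaps the \key{blue}-phase of the $N'$-trajectory (times $N'-j$ for $1\leq j\leq m$, i.e.\ from $N'-m$ to $N'-1$) precisely when $N'-m\leq N-m-1$, that is $N'-N\leq -1$ --- so I should instead compare the \key{blue}-phase of $N$ with an earlier phase of $N'$, or use that a single cell cannot carry both a \cinco{blue}/\key{blue} and lie on the \pacman{yellow} worldline.

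The cleanest route is probably this: the \cinco{blue} at $(0,N')$ is, by Lemma~\ref{trayectoria} applied at $N'$, preceded by \key{blue} symbols marching down the left edge, and these trace back to a \cinco{blue} at $(m+1, N'-3m+1)$. But Lemma~\ref{trayectoria} applied at $N$ tells us what sits near position $0$ at times around $N$: in particular at time $N-1$ we have $T^{N-1}x_1=\key{blue}$, and more generally the cells at positions $1,\dots,m$ are occupied by \key{blue}, then \pacman{yellow}, then \ghost{blue} as we go back in time from $N$. If $N'-N$ were smaller than $2m-1$, the \key{blue} at $(1,N'-1)$ required by the $N'$-trajectory would have to coincide with a cell the $N$-trajectory has already committed to a non-\key{blue} value (since $N'-1$ would fall in the window $[N-2m+1, N-1]$ where position $1$ is forced to be \pacman{yellow} or \ghost{blue} rather than \key{blue}), giving the contradiction. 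I expect the main obstacle to be bookkeeping: pinning down exactly which time windows each phase of each trajectory occupies, checking the boundary indices $j=1$ and $j=m$ carefully, and making sure the overlap argument is tight enough to yield $N'-N\geq 2m-1$ rather than a weaker bound --- essentially a careful interval-arithmetic chase through the conclusions of Lemma~\ref{trayectoria}, plus honest verification of the $m\leq 1$ cases directly from the rule table.
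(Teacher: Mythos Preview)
Your overall strategy --- invoke Lemma~\ref{trayectoria} at both times and hunt for a spacetime collision --- is exactly what the paper does, but you have not found the right pair of phases to collide, and your fallback argument is unjustified.

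You tried comparing the \pacman{yellow}-phase of the $N$-trajectory with the \key{blue}-phase of the $N'$-trajectory and (correctly) saw that the time windows are disjoint. The fix is simply to swap the roles: compare the \key{blue}-phase of the \emph{earlier} event $N$, namely $T^{N-j}x_j=\key{blue}$ for $1\le j\le m$, with the \pacman{yellow}-phase of the \emph{later} event $N'$, namely $T^{N'-m-j'}x_{m-j'}=\pacman{yellow}$ for $1\le j'\le m-1$. Since $N'>N$, the pacman segment of the $N'$-trajectory (which sits further back relative to $N'$) does overlap the key segment of the $N$-trajectory. Concretely, if $N'-N\le 2m-2$ is even, take $j=m-\tfrac{N'-N}{2}$ and $j'=\tfrac{N'-N}{2}$; then $N-j=N'-m-j'$ and $j=m-j'$, so the same spacetime cell is forced to be both \key{blue} and \pacman{yellow}. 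If $N'-N$ is odd, take $j=m-\lceil\tfrac{N'-N}{2}\rceil$ and $j'=\lceil\tfrac{N'-N}{2}\rceil$; now $j=m-j'$ and the times differ by one, giving $T^{N-j}x_{[j,j+1]}=\key{blue}\,\pacman{yellow}$, a pattern not in the image of $T$.

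Your final attempt (``position $1$ is forced to be \pacman{yellow} or \ghost{blue} throughout $[N-2m+1,N-1]$'') is not supported by Lemma~\ref{trayectoria}: that lemma gives only isolated spacetime values along the trajectory, not the contents of position $1$ over an entire interval of times. Without extra work that claim is a gap, whereas the swap above closes the proof with pure index arithmetic.
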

\begin{proof}
The case $0\leq m\leq 1$ is trivial.\\


Let $m\geq 2$, and $N' >N\geq 3m$ such that $T^{N}x_{0}=\cinco{blue}=T^{N'}x_{0}$. Assume that $N'-N<2m-1$. From Lemma \ref{trayectoria} we have that 
\begin{itemize}
\item $T^{N-j}x_{j}=\key{blue}$ for $1\leq j\leq m$ and
\item  $T^{N'-m-j'}x_{m-j'}=\pacman{yellow}$ for $1 \leq j' \leq m-1$.
\end{itemize}
First, suppose that $N'-N$ is even. Let $j=m-\frac{N'-N}{2}$ and $j'=\frac{N'-N}{2}$. By the asumption on $N,N'$ and $m$ it follows  that $1 \leq j \leq m$, $1 \leq j' \leq m-1$, and $$T^{N-j}x_{j}=T^{N'-m-j'}x_{m-j};$$ a clear contradiction. 

Now suppose $N'-N$ is odd. Let $j=m-\lceil \frac{N'-N}{2}\rceil$ and $j'=\lceil \frac{N'-N}{2}\rceil$. By the asumption on $N,N'$ and $m$ it follows that $1 \leq j \leq m$, $1 \leq j' \leq m-1$,

$$T^{N-j}x_{j}=\key{blue} \ \text{, and} \ T^{N'-m-j'}x_{m-j'}=\pacman{yellow}.$$
Therefore, 
$$ T^{N-j}x_{[j,j+1]}= \key{blue}  \pacman{yellow}.$$
This is also a contradiction because $\key{blue}  \pacman{yellow}$ is not on the image of $T$. 


\end{proof}

In Example \ref{example 2}, we see that considering an infinite right-tail of keymater ghosts, some get eaten and some  cross the doors. It is natural to ask what will be the frequency of \key{blue} that cross a doors. The next lemma answers this question.

\begin{lemma} \label{cota1}
	Let $w=\uno \ \cero^{m} \uno$, with $m\geq 0$ and $x=^{\infty }\cero .w\key{blue}^{\infty }$. 
	
	If $0\leq m\leq 1$, then
	\begin{itemize}
		\item  $T^{3m-2}x_{0}= \cinco{blue}$,
		\item $T^{3m-2+(2m+1)k}x_{0}= \cinco{blue}$ for $k\geq 0$, and
		\item $T^{i}x_{0}=\uno$, for all $3m-2+(2m+1)k <i< 3m-2+(2m+1)(k+1)$ and  $k\geq 0$. 
	\end{itemize}	
	If $m\geq 2$, then 
	\begin{itemize}
	    \item $T^{3m}x_{0}= \cinco{blue}$, 
	    \item $T^{3m+(2m-1)k}x_{0}= \cinco{blue}$ for  $k\geq 0$,  and 
	    \item $T^{i}x_{0}=\uno$, for all $3m+(2m-1)k<i<3m+(2m-1)(k+1)$ and 
	    $k\geq 0$.  
\end{itemize}
\end{lemma}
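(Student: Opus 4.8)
\textbf{Proof proposal for Lemma \ref{cota1}.}

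The plan is to track the single ``keymaster that will actually cross'' at each epoch and show that the configuration resets itself periodically, with period determined by the round-trip cost computed in the earlier lemmas. I would start with the case $m\geq 2$. By Lemma \ref{palabrafinita01} (applied with the word $\uno \ \cero^m \ \uno$, so its parameter is $m+2$, giving $N = 3(m+2)-5 = 3m+1$ steps for $\key{blue}$ to cross $\ldots$ — here one must be careful, since in the present lemma the rightmost symbol is an infinite tail of $\key{blue}$'s rather than a single $\key{blue}$ followed by $\cero^\infty$; but the crossing of the \emph{first} $\key{blue}$ through the left door sees only finitely many cells and hence behaves exactly as in Lemma \ref{palabrafinita01} until it reaches coordinate $0$). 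This gives $T^{3m}x_0 = \cinco{blue}$, establishing the first bullet. The shift-by-one in the exponent (I wrote $3m$ rather than $3m+1$) has to be reconciled by re-examining the trajectory: with the door already present at position $m+1$ there is no ``initial'' step converting it, so the reader should expect exactly the count $3m$ to come out of the explicit trajectory in the proof of Lemma \ref{palabrafinita01} when the right boundary condition is $\key{blue}^\infty$; I would simply re-run that computation in this setting.

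Next I would argue that once $T^{3m}x_0=\cinco{blue}$, the configuration seen to the right of (and including) the left door at time $3m$ is, \emph{up to finitely many relevant coordinates}, again of the form $\uno\ \cero^m\ \uno\,\key{blue}^\infty$ but with the epoch shifted; more precisely, between two successive crossings exactly one new $\key{blue}$ from the infinite tail advances into the corridor, the doors stay fixed (door invariance), and the corridor is ``flushed'' of all but that one keymaster. Combining Lemma \ref{trayectoria} (which reconstructs the trajectory backwards from a $\cinco{blue}$ at coordinate $0$) with Lemma \ref{Lcontradiccion} (which shows two crossing times for $m\geq 2$ differ by at least $2m-1$), I would show the gap is \emph{exactly} $2m-1$: Lemma \ref{Lcontradiccion} gives the lower bound, and an explicit forward count of the next keymaster's descent through the flushed corridor (length-$m$ corridor, ghost phase, pacman phase, keymaster phase, as enumerated in Lemma \ref{palabrafinita01}, but now starting from the already-in-place geometry) gives the matching upper bound. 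Hence $T^{3m+(2m-1)k}x_0=\cinco{blue}$ by induction on $k$, using the self-similarity of the configuration after each crossing as the inductive hypothesis.

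For the third bullet (that $T^i x_0 = \uno$ strictly between consecutive crossing times), I would invoke Remark \ref{Remark1} together with the observation that the only way coordinate $0$ can carry a symbol other than $\uno$ is if a ghost or keymaster is adjacent on the right and about to enter; by the trajectory description in Lemma \ref{trayectoria}, the unique time in each epoch at which $x_1$ carries such a symbol is the step immediately before a $\cinco{blue}$ appears at $0$, so at all other times the rule forces $Tx_0=\uno$ from $x_0=\uno$, $x_1=\cero$. The case $0\leq m\leq 1$ is then a direct finite check: for $m=0$ the corridor is $\uno\uno$ and the round trip cost is $1$, giving the claimed $3m-2 = -2$ — so here the indexing convention must be read with care, and I would just verify the first few iterates by hand exactly as in Examples \ref{example 2} and \ref{example 1}; for $m=1$ similarly, with period $2m+1=3$.

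I expect the main obstacle to be bookkeeping the exact constants: matching the ``$3m$'' and ``$2m-1$'' against the shifted indices coming out of Lemmas \ref{palabrafinita01}, \ref{trayectoria}, \ref{Lcontradiccion} (whose natural parameter is the corridor length $m$ while Lemma \ref{palabrafinita01}'s is the full word length $m+2$), and especially handling the boundary effect of the infinite $\key{blue}$-tail versus the single $\key{blue}$ in Lemma \ref{palabrafinita01} — i.e.\ proving rigorously that the finitely-many-cell dependence of the local rule makes the first keymaster's crossing identical in both settings, and that after the crossing the corridor genuinely returns to the same local pattern so the induction closes.
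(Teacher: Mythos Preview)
Your proposal is correct and follows essentially the same route as the paper: induction on $k$ for $m\geq 2$, with the base case $T^{3m}x_0=\cinco{blue}$ obtained by rerunning the trajectory computation from Lemma~\ref{palabrafinita01}, and the small cases $m\in\{0,1\}$ left as a direct check. The only tactical differences are that the paper pins down the period by a single forward observation (namely $T^{2m-1}x_{m+2}=\key{blue}$, whence $T^{5m-1}x_0=\cinco{blue}$) rather than your lower-bound/upper-bound sandwich, and it derives the third bullet directly from Lemma~\ref{Lcontradiccion} rather than from Remark~\ref{Remark1}; your constant-bookkeeping worries (including the oddity that $3m-2<0$ when $m=0$) are legitimate but do not affect the argument's validity.
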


\begin{proof}
The proof for $0\leq m\leq 1$ is similar to the proof when $m\geq 2$. So we are only going to prove the result when $m\geq 2$. Using a similar argument of the proof of Lemma~\ref{palabrafinita01} we obtain that $T^{3m}x_{0}= \cinco{blue}$  and  $T^{i}x_{0}= \uno$ for all $0<i<3m$. Also, we have that $T^{2m-1}x_{m+2}=\key{blue}$. Hence, $T^{5m-1}x_{0}= \cinco{blue}$.\\

We will proceed by induction on $k$. Let us assume that $$T^{3m+(2m-1)l}x_{0}=\cinco{blue}.$$ Next, let $k=l+1$. By the induction hypothesis we have that 
$$T^{2m-1+(2m-1)l}x_{(m+2)}=\key{blue}.$$ 
Hence, $T^{5m - 1 +(2m-1)l}x_{0}=\cinco{blue}.$ Doing simple calculations we obtain $$T^{3m + (2m-1)(l+1)}x_{0}=\cinco{blue}.$$ 

The proof of $T^{i}x_{0}=\uno$, for all $3m+(2m-1)k<i<3m+(2m-1)(k+1)$ and  $k\geq 0$, follows immediately from Lemma \ref{Lcontradiccion}. 
\end{proof}

We will now prove that the set of equicontinuity points is empty.

\begin{proposition} \label{finiteword}

Let $m\geq 1$ and $w\in A^{m}$. Then there exist $x,y\in A^{\ZZ}$ such that $$x_{[0,m-1]}=w=y_{[0,m-1]}$$ and the set $$S=\{ i\in \ZZ_{\geq 0} : T^{i}x_{0}\neq T^{i}y_{0} \}$$ is infinite.
\end{proposition}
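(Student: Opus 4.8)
The goal is to show that no word $w$ forces eventual agreement of the $0$th column for all extensions, which makes $EQ$ empty (combined with the shift-commutation, since any equicontinuity point would force such agreement on every finite window). The strategy is to exhibit, for an arbitrary $w\in A^m$, two points that agree on $[0,m-1]$ but whose $0$th columns differ infinitely often. The plan is to take
$$x = {}^{\infty}\cero.\,w\,\uno\,\cero^{\infty}, \qquad y = {}^{\infty}\cero.\,w\,\uno\,\key{blue}^{\infty},$$
so that $x$ and $y$ agree on all coordinates $\leq m$ (they only differ in the infinite right tail after the door at position $m$). By Lemma~\ref{lemma3} applied to $x$, there is $N_0>0$ such that $T^n x_i\in\{\cero,\uno\}$ for all $i\geq 0$ and all $n\geq N_0$; in particular $T^n x_0\in\{\cero,\uno\}$ eventually, and since a door stays a door, in fact $T^n x_0$ is eventually constant (either always $\cero$ or always $\uno$, depending on whether position $0$ is a door in $x$).

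For $y$, the idea is to reduce to Lemma~\ref{cota1}. First I would handle the coordinates of $w$ to the left of the last door before position $0$ — but more directly: let $p$ be the position of the rightmost door among $w_0,\dots,w_{m-1},\uno$ that lies at or to the left of coordinate $0$... in fact the cleanest route is to use the shift-invariance built into the CA together with Lemma~\ref{cota1}. Concretely, the tail $\uno\,\key{blue}^\infty$ starting at position $m$ together with the first door to its left at some position $-\ell\leq 0$ (which exists since we can pad $w$ on the left, or simply observe the door at the relevant place) creates a configuration of the form ${}^\infty\cero\,.\,\uno\,\cero^{m'}\,\uno\,\key{blue}^\infty$ up to a shift; Lemma~\ref{cota1} then says the left door of this pair turns into $\cinco{blue}$ infinitely often (at an arithmetic progression of times) and is $\uno$ in between. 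Shifting back, the coordinate $0$ of $y$ — if it sits on that left door — oscillates between $\cinco{blue}$ and $\uno$ infinitely often, so $T^i y_0 \neq \cero$ infinitely often while $T^i x_0$ is eventually in $\{\cero,\uno\}$; if $x$'s column is eventually $\cero$ we are done, and if it is eventually $\uno$ we are done because $y$'s column hits $\cinco{blue}$ infinitely often. If coordinate $0$ does not sit on a door, one instead tracks the ghosts/pacmans/keymasters streaming leftward through coordinate $0$ (as in the trajectory description of Lemma~\ref{palabrafinita01} and Lemma~\ref{trayectoria}), which pass through infinitely often in $y$ but are absent eventually in $x$.

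The main obstacle is bookkeeping: depending on the letters of $w$ near coordinate $0$ (is there a door at or left of $0$? does $w$ itself already contain doors that trap things?), the argument splits into a few cases, and one must make sure the chosen $x,y$ genuinely agree on $[0,m-1]$ while still letting the right-tail behavior ``reach'' coordinate $0$. The key point making this work is that, by Lemma~\ref{cota1}, the keymaster tail produces activity at the nearest left door at a positive-density set of times forever, whereas by Lemma~\ref{lemma3} the configuration $x$ with the inert tail $\cero^\infty$ becomes frozen in $\{\cero,\uno\}^{\ZZ_{\geq 0}}$; so the two $0$th columns cannot eventually coincide. I would also remark that this immediately gives, for any equicontinuity point candidate, a contradiction, hence $EQ=\emptyset$ for the Pacman CA, even though this corollary is not part of the present statement.
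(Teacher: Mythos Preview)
Your approach is essentially the paper's: take $y={}^{\infty}\cero.\,w\,\uno\,\key{blue}^{\infty}$ and an $x$ with an inert right tail, then invoke Lemma~\ref{lemma3} to freeze the $0$th column of $x$ in $\{\cero,\uno\}$ and Lemma~\ref{cota1} to make the $0$th column of $y$ change infinitely often. The paper's only differences are cosmetic: it uses $x={}^{\infty}\cero.\,w\,\cero^{\infty}$ without your extra door at position $m$ (your choice actually makes the appeal to Lemma~\ref{lemma3} more literal), and it does not spell out the door/no-door case analysis you sketch --- it simply cites the two lemmas and stops.
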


\begin{proof}
Let $m\geq 1$, $w\in A^{m}$ and $x=^{\infty}\cero .w\cero^{\infty}$. Lemma \ref{lemma3} says there exists $N>0$ such that $T^{n}x_{0}\in \{ \cero,\uno \}$ for every $n\geq N$. Let $y=^{\infty}\cero .w\uno \ \key{blue}^{\infty}$, by Lemma \ref{cota1} the the set $S$ is infinite.
\end{proof}

Lemma \ref{cota1} tells us the exact frecuency of \key{blue} crossing doors when we have a tail $\key{blue}^{\infty}$ to the right. If we do not have precise information on what is in the right we may not have the exact frecuency as in the Lemma \ref{cota1}. However, using Lemma \ref{Lcontradiccion}, we will be able to obtain an upper bound.






Now we will explore a similar situation but with finitely many doors. 

\begin{lemma}\label{lemma2}
Let $\{d_i \}_{i=0}^n$ a finite set of non-negative integers, $v\in A^{\NN}$, $$w=\uno \cero^{d_{0}}\uno \cero^{d_{1}} \cdots \uno \cero^{d_{n}} \uno,$$ $x=\infinito\cero .wv,$ and $0\leq j< n+\sum_{i=0}^{n-1}d_{i}$. Assume that $T^{N}x_{j}= \cinco{blue} = T^{N'}x_{j}$ for some $N,N'\geq 0$.
We have that
\begin{itemize}
\item if $0\leq d_{n}\leq 1$, then $|N-N'|>2d_{n}$, and
\item if $d_{n}\geq 2$, then $|N-N'|>2(d_{n}-1)$. 
\end{itemize}
  
\end{lemma}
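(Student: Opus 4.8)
\textbf{Proof proposal for Lemma \ref{lemma2}.} The plan is to reduce the situation with finitely many doors to the already-analyzed situation of Lemma \ref{Lcontradiccion} by locating, for a given coordinate $j$ with $0\le j< n+\sum_{i=0}^{n-1}d_i$, which door-gap the coordinate lies in, and by observing that the local dynamics around that gap cannot distinguish between ``finitely many doors with whatever $v$ is to the right'' and ``infinitely many empty cells to the right after a closing door.'' Concretely, since $j$ is strictly to the left of the last door of $w$, it sits inside some block $\uno\ \cero^{d_k}\ \uno$ of $w$; the value $T^N x_j=\cinco{blue}$ forces (by the door-invariance heuristic, formalized via the rule for $\cinco{blue}$) that $x_j\in\{\uno,\cinco{blue}\}$, so in fact $j$ is the position of one of the doors, and the relevant gap to its right is $\cero^{d_n}$ followed by the final $\uno$ only when $j$ is the second-to-last door; in general the gap immediately to the right of the door at position $j$ is $\cero^{d_\ell}$ for the appropriate $\ell$. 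I would first argue that whether a door at position $j$ becomes $\cinco{blue}$ at time $t$ depends only on $x$ restricted to coordinates $\ge j$ up to the first $\uno$ strictly after it together with its past — more precisely, by the trajectory analysis of Lemma \ref{trayectoria}, the event $T^t x_j=\cinco{blue}$ is determined by the configuration on $[j, j+d_\ell+1]$ and this information propagates exactly as in the two-door word $\uno\ \cero^{d_\ell}\ \uno$ studied before.

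The key step is the following locality claim: for the door at position $j$, followed by a gap $\cero^{d_\ell}$ and then a door, the times $t$ with $T^t x_j=\cinco{blue}$ coincide, as far as their spacing is concerned, with the times at which the left door of the isolated word $\uno\ \cero^{d_\ell}\ \uno$ (with an arbitrary right-tail) turns into $\cinco{blue}$. This is because a $\cinco{blue}$ appears at position $j$ exactly when a $\key{blue}$ arrives at position $j+1$ from the right with an empty door at $j$ (rules for $\cinco{blue}$ and $\key{blue}$), and such a $\key{blue}$ either entered the gap from position $j+2$ or was produced there; in either case Lemma \ref{trayectoria} (applied with $m=d_\ell$, reading coordinates relative to $j$) reconstructs its trajectory inside $[j,j+d_\ell+1]$, so nothing to the right of the door at $j+d_\ell+1$ is relevant to the \emph{spacing} between consecutive such times, even if it can affect \emph{whether} a given $\key{blue}$ arrives. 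Granting this, two times $N,N'$ with $T^N x_j=\cinco{blue}=T^{N'}x_j$ are (shifts of) two such times for the isolated word $\uno\ \cero^{d_\ell}\ \uno$, and Lemma \ref{Lcontradiccion} — whose hypothesis $N,N'\ge 3m$ is not actually needed for the conclusion about the \emph{difference}, since the contradiction argument there only uses $N'-N<2m-1$ — gives $|N-N'|>2d_\ell$ when $0\le d_\ell\le 1$ and $|N-N'|>2(d_\ell-1)$ when $d_\ell\ge 2$. When $j$ is the second-to-last door we have $\ell=n$ and this is exactly the claimed bound; for the other doors $\ell\neq n$ but the same reasoning still yields the (stronger, if $d_\ell<d_n$, or at least analogous) bound, and the statement as written singles out $d_n$ because that is the gap used in the intended application. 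Alternatively, and perhaps more cleanly, I would just observe that the hypothesis $0\le j< n+\sum_{i=0}^{n-1}d_i$ is precisely the condition that places $j$ at or before the door preceding the final gap $\cero^{d_n}\uno$, so that the relevant gap really is $\cero^{d_n}$.

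I would carry this out in the following order: (1) use the $\cinco{blue}$-rule to show $T^N x_j=\cinco{blue}$ implies $x_j\in\{\uno,\cinco{blue}\}$ and identify the door-index and the gap $\cero^{d_\ell}$ to its right (with $\ell=n$ under the stated range for $j$); (2) prove the locality claim by running the backward trajectory of Lemma \ref{trayectoria} starting from $T^N x_j=\cinco{blue}$, observing that all intermediate cells lie in $[j,j+d_\ell+1]$ and obey exactly the $\uno\ \cero^{d_\ell}\ \uno$ dynamics; (3) transfer the spacing bound from Lemma \ref{Lcontradiccion} via a coordinate shift by $j$, noting that the proof of Lemma \ref{Lcontradiccion} deriving the contradiction from $N'-N<2m-1$ uses only the forward/backward trajectory descriptions and the fact that $\key{blue}\ \pacman{yellow}$ is not in the image of $T$, neither of which is disturbed by the presence of $v$ far to the right. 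The main obstacle I anticipate is step (2): making rigorous that the presence of $v$ and of the other doors cannot shorten the gap between two consecutive ``$\cinco{blue}$ at $j$'' events. The subtlety is that $v$ \emph{can} influence which keymaster ghosts ever reach position $j+1$ — ghosts can be eaten by pacmans coming from the right, or new keymaster ghosts can be manufactured at doors to the right — but what matters is that any $\key{blue}$ that does arrive at $j+1$ must, by Lemma \ref{trayectoria}, have spent the immediately preceding $d_\ell+1$ steps traversing the gap $[j+1,j+d_\ell+1]$ in the rigid pattern described there, and two such arrivals cannot be squeezed closer than the door at $j+d_\ell+1$ permits; I would make this precise by arguing that between two arrival times the door at $j+d_\ell+1$ must itself pass through the full $\cinco{blue}\to\ghost{blue}$ (or $\to\key{blue}$) recovery cycle, reproducing the bookkeeping of Lemma \ref{cota1} locally. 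Once this is pinned down, the rest is a routine shift of indices.
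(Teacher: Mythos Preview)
Your locality argument only controls the spacing of $\cinco{blue}$ at door $j$ in terms of the gap $d_\ell$ \emph{immediately to the right of $j$}, whereas the lemma asserts a bound in terms of the \emph{last} gap $d_n$. These are different: if $d_\ell<d_n$ then $|N-N'|>2(d_\ell-1)$ is \emph{weaker} than $|N-N'|>2(d_n-1)$, not stronger as you write. Your ``alternative'' reading of the hypothesis is also a miscount: the condition $0\le j<n+\sum_{i=0}^{n-1}d_i$ places $j$ \emph{strictly before} door $n$ (which sits at position $n+\sum_{i=0}^{n-1}d_i$), so $j$ can be any of doors $0,\ldots,n-1$, and the adjacent gap ranges over $d_0,\ldots,d_{n-1}$, never $d_n$. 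Thus your plan, as stated, does not prove the lemma.

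The missing idea is exactly what the paper's proof supplies: induction on the number of gaps, together with the observation (from Lemma~\ref{trayectoria}) that the transit time of a keymaster through a single block $\uno\,\cero^{d_0}\,\uno$ is a fixed constant $r$ depending only on $d_0$. Hence if door $0$ shows $\cinco{blue}$ at times $N,N'$, then door $1$ shows $\cinco{blue}$ at times $N-r,\,N'-r$, and the spacing $|N-N'|$ is preserved. By the induction hypothesis applied to the subword $\uno\,\cero^{d_1}\cdots\uno\,\cero^{d_n}\,\uno$ (shifted so that door $1$ becomes the new origin), that spacing is already $\ge 2(d_n-1)$ (resp.\ $>2d_n$). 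This chaining from door $j$ all the way to door $n$ is what lets $d_n$, rather than the local $d_\ell$, govern the bound; your step (2)/(3) stops one door too soon.
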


\begin{proof}
The case where $n=0$ is a direct application of Lemma \ref{Lcontradiccion}. We will prove the other case by induction. Assume that for $n=p$ the result holds. Now, let $n=p+1$ . By the induction hypothesis we have that if $x_{j}=\uno$, for all $1\leq j \leq p+1+ \sum_{i=0}^{p}d_{i}$, and $T^{N}x_{j} = \cinco{blue} = T^{N'}x_{j}$ for all $N,N'\geq 0$ then

\begin{displaymath}
\begin{array}{rccl}
 \text{if}  & d_{p+1}\geq 2       & \text{then} & \vert N-N'\vert \geq 2(d_{p+1}-1)  \ \text{or} \\
\text{if} & 0\leq d_{p+1}\leq 1 & \text{then} & \vert N-N'\vert \geq 2d_{p+1}. 
\end{array}
\end{displaymath}

Hence, the only thing left to proof is that for $x_{0}=\uno$ and all $N,N'\geq 0$ such that $T^{N}x_{0} = \cinco{blue} = T^{N'}x_{0}$ we have that 
\begin{displaymath}
\begin{array}{rccl}
\text{if} & d_{p+1}\geq 2       & \text{then} &\vert N-N'\vert \geq 2(d_{p+1}-1)\ \text{or} \\
\text{if} & 0\leq d_{p+1}\leq 1 & \text{then} &\vert N-N'\vert \geq 2d_{p+1}.
\end{array}
\end{displaymath}
For $0\leq d_{p+1}\leq 1$ the result is trivial. So, let us assume that $d_{p+1}\geq 2$. Also, let us assume that there exist $N,N'\geq 0$ such that $T^{N}x_{0} = \cinco{blue} = T^{N'}x_{0}$.
This means that there exist $N_{0},N_{0}'\geq 0$ such that $T^{N_{0}}x_{d_{0}+1} = \cinco{blue} =  T^{N_{0}'}x_{d_{0}+1}$ and
$N_{0}' + r=N'$ and $N_{0} + r=N$. Therefore,
\begin{displaymath}
\begin{array}{rcl}
2(d_{p+1}-1) & \leq & |N'-N|. \\
\end{array}
\end{displaymath}
\end{proof}

For Lemma \ref{lemma6} it will be useful to consider the CA as a (vanishing) particle system, where ghosts and pacmans are particles. 

We define the \textbf{particle function} $\gamma: A^{\ZZ}\rightarrow \{ \cero , \punto{black} \ \}^{\ZZ}$ as

\begin{displaymath}
\gamma (x)_{i}= \left\lbrace 
\begin{array}{lcl}
\cero & if & x_{i}\in \{ \cero, \uno \} , \\
\punto{black} \ & if & x_{i} \in \{ \ghost{blue},\key{blue},\pacman{yellow}, \cinco{blue} \} , 
\end{array} \right.  
\end{displaymath}
where $x\in A^{\ZZ}$ and $i\in \ZZ$. Observe that with this function the Examples \ref{example 2} and \ref{example 1} turn out as follows:

\begin{displaymath}
\begin{array}{ccccccccccccccccccccccc}
\uno        & \cero          &\cero          &\cero          &\cero          &\cero        &\uno        &\key{blue}&\key{blue} & & 
\cero       & \cero          &\cero          &\cero          &\cero          &\cero        &\cero       &\punto{black}     &\punto{black}      \\
\uno        & \cero          &\cero          & \cero         &\cero          &\cero        &\cinco{blue}&\key{blue}&\key{blue} & &
\cero       & \cero          &\cero          & \cero         &\cero          &\cero        &\punto{black}     &\punto{black}     &\punto{black}     \\
\uno        & \cero          &\cero          &\cero          &\cero          &\ghost{blue}&\cinco{blue}&\key{blue}&\key{blue} & &
\cero       & \cero          &\cero          &\cero          &\cero          &\punto{black}     &\punto{black}     &\punto{black}     &\punto{black}       \\
\uno        & \cero          &\cero          &\cero          & \ghost{blue} &\ghost{blue}&\cinco{blue}&\key{blue}&\key{blue} & &
\cero       & \cero          &\cero          &\cero          & \punto{black}      &\punto{black}      &\punto{black}     &\punto{black}     &\punto{black}      \\
\uno        & \cero          &\cero          &\ghost{blue}  &\ghost{blue}  &\ghost{blue}&\cinco{blue}&\key{blue}&\key{blue} & &
\cero       & \cero          &\cero          &\punto{black}        &\punto{black}        &\punto{black}      &\punto{black}    &\punto{black}     &\punto{black}  \\
\uno        & \cero          &\ghost{blue}  &\ghost{blue}  &\ghost{blue}  &\ghost{blue}&\cinco{blue}&\key{blue}&\key{blue} & &
\cero       & \cero          &\punto{black}        &\punto{black}        &\punto{black}        &\punto{black}      &\punto{black}     &\punto{black}     &\punto{black}      \\
\uno        & \pacman{yellow}&\ghost{blue}  &\ghost{blue}  &\ghost{blue}  &\ghost{blue}&\cinco{blue}&\key{blue}&\key{blue} & & 
\cero       & \punto{black}        &\punto{black}        &\punto{black}        &\punto{black}        &\punto{black}      &\punto{black}     &\punto{black}     &\punto{black}      \\
\uno        & \cero          &\pacman{yellow}&\ghost{blue}  &\ghost{blue}  &\ghost{blue}&\cinco{blue}&\key{blue}&\key{blue} & \Rightarrow & 
\cero       & \cero          &\punto{black}        &\punto{black}      &\punto{black}       &\punto{black}      &\punto{black}     &\punto{black}     &\punto{black}      \\
\uno        & \cero          &\cero          &\pacman{yellow}&\ghost{blue}  &\ghost{blue}&\cinco{blue}&\key{blue}&\key{blue} & &
\cero       & \cero          &\cero          &\punto{black}        &\punto{black}        &\punto{black}      &\punto{black}     &\punto{black}     &\punto{black}      \\
\uno        & \cero          &\cero          &\cero          &\pacman{yellow}&\ghost{blue}&\cinco{blue}&\key{blue}&\key{blue} & & 
\cero       & \cero          &\cero          &\cero          &\punto{black}        &\punto{black}      &\punto{black}  &\punto{black}     &\punto{black}      \\
\uno        & \cero          &\cero          &\cero          &\cero          &\key{blue} &\cinco{blue}&\key{blue}&\key{blue} & & 
\cero       & \cero          &\cero          &\cero          &\cero          &\punto{black}      &\punto{black}     &\punto{black}    &\punto{black}       \\
\uno        & \cero          &\cero          &\cero          &\key{blue}   &\ghost{blue}&\cinco{blue}&\key{blue}&\key{blue} & &
\cero       & \cero          &\cero          &\cero          &\punto{black}        &\punto{black}      &\punto{black}     &\punto{black}     &\punto{black} \\
\uno        &\cero          &\cero          &\key{blue}   &\ghost{blue}  &\ghost{blue}&\cinco{blue}&\key{blue}&\key{blue} & &
\cero       &\cero          &\cero          &\punto{black}        &\punto{black}        &\punto{black}      &\punto{black}     &\punto{black}     &\punto{black}     \\
\uno        &\cero          &\key{blue}   &\ghost{blue}  &\ghost{blue}  &\ghost{blue}&\cinco{blue}&\key{blue} &\key{blue} & & 
\cero       &\cero          &\punto{black}        &\punto{black}        &\punto{black}        &\punto{black}      &\punto{black}     &\punto{black}      &\punto{black}     \\
\uno        &\key{blue}   &\ghost{blue}  &\ghost{blue}  &\ghost{blue}  &\ghost{blue}&\cinco{blue}&\key{blue}&\key{blue}& & 
\cero       &\punto{black}        &\punto{black}        &\punto{black}        &\punto{black}        &\punto{black}      &\punto{black}     &\punto{black}     &\punto{black}     \\
\cinco{blue}&\pacman{yellow}&\ghost{blue}  &\ghost{blue}  &\ghost{blue}  &\ghost{blue}&\cinco{blue}&\key{blue}&\key{blue}  & &
\punto{black}     &\punto{black}        &\punto{black}        &\punto{black}        &\punto{black}       &\punto{black}      &\punto{black}     &\punto{black}     &\punto{black}      \\

\end{array}
\end{displaymath}

\begin{displaymath}
\begin{array}{ccccccccccccccccccccccc}
\uno        & \cero          &\cero          &\ghost{blue}  &\pacman{yellow}&\cero        &\uno        &\key{blue}&\cero & & \cero        & \cero          &\cero          &\punto{black}  &\punto{black}  &\cero        &\cero        &\punto{black}  &\cero\\

\uno        & \cero          &\ghost{blue}  & \cero         &\cero          &\key{blue} &\cinco{blue}&\cero &\cero & &\cero        & \cero          &\punto{black}  & \cero         &\cero          &\punto{black}  &\punto{black}  &\cero &\cero \\ 

\uno        & \pacman{yellow}&\cero          &\cero          &\key{blue}   &\ghost{blue}&\uno      &\cero & \cero & & \cero        & \punto{black}  &\cero          &\cero          &\punto{black}   &\punto{black}  &\cero      &\cero & \cero \\  

\uno        & \cero          &\pacman{yellow}&\key{blue}   & \ghost{blue} &\cero        &\uno        &\cero &\cero &         & \cero        & \cero          &\punto{black}     &\punto{black}        & \punto{black}       &\cero        &\cero       &\cero &\cero \\

\uno        & \cero          &\cero          &\pacman{yellow}&\cero          &  \cero      &\uno        &\cero &\cero & &\cero       & \cero          &\cero          &\punto{black}      &\cero          &  \cero      &\cero       &\cero &\cero\\

\uno        & \cero          &\cero          &\cero          &\pacman{yellow}&\cero        &\uno        &\cero &\cero & \Rightarrow &      \cero       & \cero          &\cero          &\cero          &\punto{black}        &\cero        &\cero       &\cero &\cero \\

\uno        & \cero          &\cero          &\cero          &\cero          &\key{blue} &\uno &\cero &\cero & &            \cero      & \cero          &\cero          &\cero          &\cero          &\punto{black}      &\cero &\cero &\cero\\

\uno        & \cero          &\cero          &\cero          &\key{blue}   &\cero        &\uno &\cero &\cero & &            \cero       & \cero          &\cero          &\cero          &\punto{black}     &\cero        &\cero &\cero &\cero\\

\uno        & \cero          &\cero          &\key{blue}   &\cero          &\cero        &\uno &\cero &\cero & &           \cero       & \cero          &\cero          &\punto{black}      &\cero          &\cero        &\cero &\cero &\cero\\

\uno        & \cero          &\key{blue}   &\cero          &\cero          &\cero        &\uno &\cero &\cero & &              \cero       & \cero          &\punto{black}      &\cero          &\cero          &\cero        &\cero &\cero &\cero\\

\uno        & \key{blue}   &\cero          &\cero          &\cero          &\cero        &\uno &\cero &\cero & &            \cero       & \punto{black}      &\cero          &\cero          &\cero          &\cero        &\cero &\cero &\cero\\
\cinco{blue}& \cero          &\cero          &\cero          &\cero          &\cero        &\uno &\cero &\cero & &              \punto{black}    & \cero          &\cero          &\cero          &\cero          &\cero        &\cero &\cero &\cero
\end{array}
\end{displaymath}

Given $x,y\in X$, we define the sets $$S_{+j}:=\{ i\geq 0: T^{i}x_{j}\neq T^{i}y_{j}  \}$$ and $$S_{-j}:=\{ i\geq 0: T^{i}x_{-j}\neq T^{i}y_{-j}  \}.$$ Observe that $S_{j}=S_{+j}\cup S_{-j}$ (see Proposition \ref{me} for the defintion of $S_{j}$). 

\begin{lemma}\label{lemma6}
Let $d>0$, $w=\uno \ \cero^{d}\uno$, $x:=^{\infty}\cero .w\cero^{\infty}$, $v\in A^{\NN}$, and $y=^{\infty}\cero .wv$. If $1\leq i\leq d$, then
$$3\overline{D}(S_{+(d+1)})\geq \overline{D}(S_{+i}).$$
\end{lemma}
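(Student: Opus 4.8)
The plan is to analyze the configuration $y = \,^{\infty}\cero.wv$ with $w = \uno\,\cero^{d}\uno$ and to show that every time coordinates $j$ with $1 \le j \le d$ (inside the gap between the two doors of $w$) witness a discrepancy between $T^i x$ and $T^i y$, this discrepancy is ``caused'' by something that must, within a bounded number of steps, also manifest at coordinate $d+1$ (the position of the right door of $w$, equivalently the \cinco{blue}/\uno there). The key observation is that the left door of $w$ sits at coordinate $0$, so the gap $\{1,\dots,d\}$ is a region of length $d$ that is shielded on the left: by Remark~\ref{Remark1} and Lemma~\ref{lemma3}, $x$ eventually stabilizes on $\{\cero,\uno\}$ at every nonnegative coordinate, so for large $i$, $i \in S_{+j}$ exactly means $T^i y_j \notin \{\cero,\uno\}$, i.e.\ a particle (in the sense of $\gamma$) sits at coordinate $j$ in $y$ at time $i$.

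First I would reduce to understanding $S_{+i}$ only up to finitely many initial times, since the upper density ignores any finite initial segment; so we may assume $T^i x_k \in \{\cero,\uno\}$ for all $k \ge 0$ whenever $i \ge N_0$. Then I would argue that a particle of $\gamma(T^i y)$ occupying coordinate $i$ (with $1 \le i \le d$) must be a ghost \ghost{blue}, a keymaster \key{blue}, a pacman \pacman{yellow}, or a \cinco{blue}; but coordinate $i \ne 0, d+1$ is not a door position of $w$, and since doors never move (first heuristic bullet, provable from the rules), coordinate $i$ can carry a \cinco{blue} only if $v$ placed a door there — but $i \le d$ means $i$ is strictly between the two doors of $w$, so there is no door at $i$; hence the particle is \ghost{blue}, \key{blue}, or \pacman{yellow}. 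A ghost or keymaster moves left one cell per step (when unobstructed by a pacman), and a pacman moves right one cell per step (when unobstructed). Tracking such a particle forward or backward in time: a leftward-moving ghost/keymaster at coordinate $i$ will, barring collision with a pacman or reaching the left door, pass through coordinate $d+1$ at an earlier time (within $d+1-i \le d$ steps), and a rightward-moving pacman will pass through coordinate $d+1$ at a later time (within $d+1-i \le d$ steps). Collisions only destroy particles, so each discrepancy-time at coordinate $i$ can be charged to a discrepancy-time at coordinate $d+1$ occurring within $d$ steps. Since at most finitely many particles can be in the bounded window $[1,d]$ at any one time (at most $d$ of them), and each ``charges'' an interval of length $\le d$ around a time in $S_{+(d+1)}$, we get $\overline D(S_{+i}) \le (2d+1)\,\overline D(S_{+(d+1)})$ or, with a more careful accounting using that the particle is at $d+1$ for exactly one step while traversing, the cleaner bound $3\overline D(S_{+(d+1)}) \ge \overline D(S_{+i})$ — the factor $3$ coming from: the particle is at $d+1$ once, and the ``extra'' discrepancy times at $i$ split into those before and those after that crossing, each of which is in bijection (via the deterministic trajectory) with distinct elements near $S_{+(d+1)}$.

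The main obstacle I expect is making the charging argument honest despite the interaction rules. The clean ``particles move at unit speed'' picture is only valid away from collisions and doors, and in particular a keymaster entering the right door becomes \cinco{blue} and then re-emerges (with or without its key) moving left — so a single ``physical'' particle's worldline can be broken by the door at $d+1$ itself. But this actually helps rather than hurts: any such event is precisely an event at coordinate $d+1$, so it lands in $S_{+(d+1)}$ and the charge is legitimate. The real care is needed when a ghost/keymaster in the gap gets eaten by a pacman before reaching $d+1$, or a pacman is created from a ghost hitting the left door at coordinate $0$: one must check that in $x$ the same coordinate is quiescent, so the discrepancy genuinely comes from $y$'s particle, and that the bookkeeping does not double-count. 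I would handle this by noting that, since $x$ is eventually all $\{\cero,\uno\}$ on $[0,\infty)$, after time $N_0$ the discrepancy at $j$ is entirely governed by $y$'s particle content at $j$, and then invoking the deterministic-trajectory description (as in the proofs of Lemma~\ref{palabrafinita01} and Lemma~\ref{trayectoria}, run forward and backward) to pin down, for each $i \in S_{+i}$ with $i$ large, a unique associated crossing time at $d+1$ within the window $[i - d,\, i + d]$, from which $\sharp(S_{+i} \cap [0,n]) \le 3\,\sharp(S_{+(d+1)} \cap [0, n + d]) + O(1)$ and hence the claimed density inequality follow upon dividing by $n+1$ and taking $\limsup$.
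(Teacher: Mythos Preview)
Your overall strategy---reduce to large times where $T^ix$ is quiescent on $[0,\infty)$, then charge each discrepancy at coordinate $i$ to a particle in $y$ and follow that particle's trajectory back to the door at $d+1$---is exactly the approach the paper takes. The paper formalizes this via the particle function $\gamma$ and an (implicitly defined) trajectory function $\tau_p$ for each particle $p$.

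The gap in your write-up is the justification of the factor $3$. Your first accounting gives $(2d+1)$, and your attempt to sharpen it (``the particle is at $d+1$ once, and the extra discrepancy times at $i$ split into those before and those after that crossing'') is not the correct picture: the unique visit of a particle to position $d+1$ occurs when it \emph{enters} the gap from the right (as $\cinco{blue}$), so all of its visits to position $i$ lie \emph{after} that crossing, not split around it. Consequently your bijection-to-nearby-crossings argument does not yield a $3$-to-$1$ bound, and the inequality $\sharp(S_{+i}\cap[0,n])\le 3\,\sharp(S_{+(d+1)}\cap[0,n+d])+O(1)$ is not established by what you wrote.

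The clean reason for the constant $3$, which the paper states explicitly, is the trajectory bound $|\tau_p^{-1}(z)|\le 3$ for every particle $p$ and every position $z$: inside the gap a particle passes through position $i$ at most once as a leftward ghost, at most once as a rightward pacman (after bouncing off the left door), and at most once as a leftward keymaster (after bouncing off the right door). Combined with $|\tau_p^{-1}(d+1)|=1$ (a particle sits at the right door exactly once, as $\cinco{blue}$, and the pacman turns into a keymaster at position $d$, never revisiting $d+1$), the counting
\[
\sharp\Big(\bigcup_{p}\tau_p^{-1}(i)\cap[0,n]\Big)\ \le\ 3\,\sharp\Big(\bigcup_{p}\tau_p^{-1}(d+1)\cap[0,n]\Big)
\]
follows immediately, and the two unions are (eventually) $S_{+i}$ and $S_{+(d+1)}$. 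Once you isolate and prove the $|\tau_p^{-1}(z)|\le 3$ fact, your argument becomes correct and coincides with the paper's.
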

\begin{proof}
	Using the CA and the particle function we can define a \emph{trajectory function} of an specific particle (a ghost/pacman) $p$. We will not construct this function explicitely, but we will give its properties.
	Given a point $x\in X$ and a particle of that point; that is, $p\in \ZZ$ with $\gamma(x)_p=\punto{black}$, we can define trajectory of that particle (all the way to the infinity or until it dissapears). This trajectory is a function $\tau_p:N \rightarrow \ZZ$ where $N\subset \NN$ is the lifespan of the particle ($N=\NN$ if it never dissapears), $\tau_p (0)=p$ and $\tau_p (n)$ the position at time $n$. We have that $|\tau_p(n)-\tau_p(n+1)|\leq 1$ for $n+1\in N$. Using the properties of $T$ it is not hard to see that for every $z\in \ZZ$ we have that $|\tau_p^{-1}(z)|\leq 3$, that is, a particle can only be at most three times on a particular position. By Lemma \ref{lemma3}, there exists $N>0$ such that if for some $l>0$, $T^{N+l}y_{i}\notin \{ \cero, \uno \}$, then there exists a unique $k\geq |w|$, such that $\gamma (y)_{k}=\punto{black}$ and $\tau_{k}(N+l)=i$. Hence, for all $n\in S_{+i}$, there exist a unique $k_{n}\geq |w|$ and $m<n$ such that $\tau_{k_{n}}(n)=i$ and $\tau_{k_{n}}(m)=d+1$. 
Since $y_{d+1}=\uno$, then $|\tau_{k_{n}}^{-1}(d+1)|=1$. Define $P:=\{ z\in \ZZ : \gamma(y)_{z}=\punto{black} \}$.
Therefore,
\begin{equation} 
\begin{array}{rcl}
\limsup_{n\rightarrow \infty}\frac{3\sharp[(\bigcup_{z\in P}\tau_{z}^{-1}(d+1))\cap 
[0,n]]}{n+1} & \geq & \limsup_{n\rightarrow \infty}\frac{\sharp[(\bigcup_{z\in P}\tau_{z}^{-1}(i))\cap [0,n]]}{n+1}.
\end{array}\label{eq1}
\end{equation}
Since $\gamma (x)_{z}=\cero$ for all $z\in \ZZ$, we have that  
$$\displaystyle\bigcup_{z\in P}\tau_{z}^{-1}(i)=S_{+i} \ \text{and}$$
$$\displaystyle\bigcup_{z\in P}\tau_{z}^{-1}(d+1)=S_{+(d+1)}.$$
Therefore, from (\ref{eq1}), we conclude that
$$
3\overline{D}(S_{+(d+1)}) \geq  \overline{D}(S_{+i}).
$$
\end{proof}

\begin{proposition}\label{punto-m-e}

Let $x= \cdots \uno \ \cero^{2^{2}} \uno \ \cero^{2^{1}} \uno \ \cero^{2^{0}} . \uno \ \cero^{2^{0}} \uno \ \cero^{2^{1}}\uno \ \cero^{2^{2}} \cdots $. Then, $x$ is a mean equicontinuity point.

\end{proposition}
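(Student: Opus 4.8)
The plan is to verify the characterisation of Proposition \ref{me}. The starting point is that $x$ is a fixed point of $T$: every symbol of $x$ is either a door $\uno$ whose right neighbour is the empty cell $\cero$ (so it stays a $\uno$ by Remark \ref{Remark1}), or an empty cell $\cero$ both of whose neighbours lie in $\{\cero,\uno\}$, and the local rule sends such a cell to $\cero$. Hence $T^{i}x=x$ for every $i$, and the sets of Proposition \ref{me} become $S_{j}=\{i\ge 0:\ T^{i}y_{j}\ne x_{j}\ \vee\ T^{i}y_{-j}\ne x_{-j}\}$. Writing $S_{\pm p}=\{i\ge 0:\ T^{i}y_{\pm p}\ne x_{\pm p}\}$, it suffices to produce, for each $m$, an $m'$ so that whenever $y$ agrees with $x$ on $[-m',m']$ one has $\overline{D}(S_{+p})\le 2^{-(m+3)}$ and $\overline{D}(S_{-p})\le 2^{-(m+3)}$ for all $0\le p\le m+1$; then $\overline{D}(S_{p})\le 2^{-(m+2)}$ and $x\in EQ^{M}$.

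Fix $m$. Let $D^{*}$ be the door of $x$ at position $2^{m+4}+m+3$ (its right-hand gap has length $2^{m+4}$) and let $D^{**}$, at position $2^{m+5}+m+4$, be the next door; put $m'=2^{m+5}+m+4$, so that $D^{**}=m'$ and $x$ also has a door at $-m'$. Let $y$ agree with $x$ on $[-m',m']$. Since $x$ carries no particle (no $\ghost{blue},\key{blue},\pacman{yellow},\cinco{blue}$), the point $y$ has no particle in $[-m',m']$ at time $0$, so — using the (vanishing) particle/trajectory formalism of Lemma \ref{lemma6} — any particle occupying a cell $p$ with $|p|\le m+1$ at a later time has a backward trajectory ending at a particle that sits, at time $0$, in a cell of absolute value $>m'$. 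Such a particle cannot have entered from the left: reaching a cell $p>-m'$ from a cell $<-m'$ would require crossing the door at $-m'$ from left to right, but a ghost and a keymaster always move left, and a pacman that reaches a door on its right turns into a keymaster one cell to the left of that door and then moves left, so no particle ever crosses a door rightwards. Consequently every disturbance of a cell $p$ with $|p|\le m+1$ is caused by a particle coming from the right; since $p<D^{*}<m'$, that particle's trajectory crossed $D^{*}$ — and only keymasters cross doors (leftwards, momentarily putting the door in state $\cinco{blue}$), ghosts and pacmans merely bouncing. A particle crosses $D^{*}$ at most once (afterwards it is permanently to the left of $D^{*}$) and visits each cell at most three times by the property $|\tau_{p}^{-1}(z)|\le 3$ recorded in the proof of Lemma \ref{lemma6}; distinct particles produce distinct $\cinco{blue}$-times at $D^{*}$. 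Hence, with $S_{D^{*}}=\{i\ge 0:\ T^{i}y_{D^{*}}=\cinco{blue}\}$, one gets $\sharp(S_{+p}\cap[0,n])\le 3\,\sharp(S_{D^{*}}\cap[0,n])$ and likewise for $S_{-p}$, so $\overline{D}(S_{\pm p})\le 3\,\overline{D}(S_{D^{*}})$.

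It remains to bound $\overline{D}(S_{D^{*}})$. Because $D^{*}$ is a door, the image under $T$ of a configuration at any cell $\ge D^{*}$ depends only on its values at cells $\ge D^{*}$ (the rule at the door $D^{*}$ outputs a value in $\{\uno,\cinco{blue}\}$ determined solely by the cell $D^{*}+1$). Therefore, setting $\tilde x={}^{\infty}\cero\,.\,\uno\,\cero^{2^{m+4}}\,\uno\,v$ with $v$ a copy of $y$ to the right of $D^{**}$ (note $y$ restricted to $[D^{*},D^{**}]$ equals $\uno\,\cero^{2^{m+4}}\,\uno$ because $D^{**}=m'$), we have $T^{n}y_{D^{*}}=T^{n}\tilde x_{0}$ for every $n$. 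Lemma \ref{Lcontradiccion} applied to $\tilde x$ with gap $2^{m+4}\ge 2$ shows that any two times $\ge 3\cdot 2^{m+4}$ at which $\tilde x_{0}=\cinco{blue}$ differ by at least $2^{m+5}-1$; the finitely many earlier such times do not affect the $\limsup$, so $\overline{D}(S_{D^{*}})=\overline{D}(\{n:\ T^{n}\tilde x_{0}=\cinco{blue}\})\le \tfrac{1}{2^{m+5}-1}$. Combining with the previous paragraph, $\overline{D}(S_{\pm p})\le \tfrac{3}{2^{m+5}-1}\le 2^{-(m+3)}$ for all $0\le p\le m+1$, hence $\overline{D}(S_{p})\le 2^{-(m+2)}$ and Proposition \ref{me} yields $x\in EQ^{M}$.

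The step I expect to be the crux is the second paragraph: converting the intuitive picture ``particles can only enter from the right, and every such particle must cross $D^{*}$ while merely bouncing off all other doors'' into a rigorous lemma, together with the $3$-to-$1$ bookkeeping from trajectories to $\cinco{blue}$-times of $D^{*}$. A cleaner organisation, mirroring the proof of Lemma \ref{lemma6}, is to first establish $\overline{D}(S_{+p})\le 3\,\overline{D}(S_{+D_{1}})$ where $D_{1}$ is the first door of $x$ to the right of $p$, and then chain $\overline{D}(S_{+D_{1}})\le \overline{D}(S_{+D_{2}})\le\dots\le\overline{D}(S_{D^{*}})$ one door at a time, each inequality being the injectivity of the map that sends a crossing of $D_{a}$ to the earlier, unique crossing of $D_{a+1}$ performed by the same particle — and symmetrically on the left of the origin.
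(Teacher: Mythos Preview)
Your argument is correct and follows essentially the same route as the paper's: both bound the frequency of $\cinco{blue}$-events at a distant door via the gap-spacing of Lemma \ref{Lcontradiccion} (the paper packages this as the multi-door Lemma \ref{lemma2}), then transfer that bound to all central coordinates with the 3-to-1 particle-trajectory count from Lemma \ref{lemma6}, and conclude via Proposition \ref{me}. The only organisational difference is that you funnel everything through one reference door $D^{*}$ (using that a door's state depends only on the configuration to its right), whereas the paper splits into a door-position part (Part 1, via Lemma \ref{lemma2}) and an empty-position part (Part 2, via Lemma \ref{lemma6}); your choice of one extra gap ($2^{m+4}$ rather than $2^{m+3}$) is exactly what makes the single $3/(2^{m+5}-1)\le 2^{-(m+3)}$ estimate absorb the factor $3$ cleanly.
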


\begin{proof}
We are going to divide the proof in two parts:

Part 1: Let $m\geq 0$, $m'=m+3+\sum_{l=0}^{m+3} 2^{l}$, and $y\in X$ with
$$y_{[-k,k]}= \uno \ \cero^{2^{m+3}} \cdots \uno \ \cero^{2^{2}} \uno \ \cero^{2^{1}}\uno \ \cero^{2^{0}}.
 \uno \ \cero^{2^{0}} \uno \ \cero^{2^{1}}\uno \ \cero^{2^{2}} \cdots  \uno \ \cero^{2^{m+3}} \uno,$$
 for a certain $k$.
By Lemma \ref{lemma2}, we have that if $x_{j}=$\uno, where $0\leq j\leq m+3+\sum_{l=0}^{m+2}2^{l}$, then for all $N,N'\geq 0$ such that $T^{N}y_{j} = \cinco{blue} = T^{N'}y_{j}$, satisfies $| N-N' |\geq 2(2^{m+3}-1)$. Now,
\begin{displaymath}
\begin{array}{rcl}
\displaystyle\limsup_{n\rightarrow \infty} \frac{\sharp (S_{j}\cap [0,n])}{n+1} &  
 \leq &\displaystyle\limsup_{n\rightarrow n} \frac{\sharp (S_{+j}\cap [0,n])}{n+1} +\displaystyle\limsup_{n\rightarrow n} \frac{\sharp (S_{-j}\cap [0,n])}{n+1}.

\end{array}
\end{displaymath}
Define $N_{0}=\min S_{+j}$. Observe that
$\frac{\sharp (S_{+j}\cap [0,N_{0}])}{N_{0}+1}=\frac{1}{N_{0}+1}$. Let $N_{1}=\min S_{+j}\setminus \{ N_{0}\}$. We have that $\frac{\sharp (S_{+j}\cap [0,N_{1}])}{N_{1}+1}=\frac{2}{N_{1}+1}<\frac{2}{N_{0}+2(2^{m+3}-1)+1}$. 
Following this construction, for every $r\geq 1$, we define $N_{r}=\min (S_{+j}\setminus \{ N_{l} \}_{l=0}^{r-1})$. Observe that
\begin{displaymath}
\begin{array}{rcl}
\frac{\sharp (S_{+j}\cap [0,N_{r}])}{N_{r}+1} & = & \frac{r+1}{N_{r}+1}\\
& < & \frac{r+1}{r(2^{m+4}-2+\frac{1}{r})}.

\end{array}
\end{displaymath}
Since $$\lim_{r\rightarrow \infty}\frac{r+1}{r} \frac{1}{2^{m+4}-2+\frac{1}{r}}=\frac{1}{2^{m+4}-2},$$ then 
$$\lim_{r\rightarrow \infty} \frac{\sharp (S_{+j}\cap [0,N_{r}])}{N_{r}+1}\leq \frac{1}{2(2^{m+3}-1)}<\frac{1}{2^{m+3}}.$$
Similarly, we obtain
$$\lim_{r\rightarrow \infty} \frac{\sharp (S_{-j}\cap [0,N_{r}])}{N_{r}+1}<\frac{1}{2^{m+3}}.$$
Thus, 
$$\lim_{r\rightarrow \infty} \frac{\sharp (S_{j}\cap [0,N_{r}])}{N_{r}+1}\leq \frac{1}{2^{m+2}}.$$

Part 2: By Lemma \ref{lemma6} and Part 1, we have that for all $j\in \ZZ$ with $x_{j}= \cero$, and $$-(m+2+\sum_{l=0}^{m+2}2^{l}) \leq j\leq m+2+\sum_{l=0}^{m+2}2^{l},$$ then
$$3\overline{D}(S_{d})\geq \overline{D}(S_{j}),$$
where $d=m+3+\sum_{l=0}^{m+2}2^{l}$.
Since $\overline{D}(S_{d})\leq \frac{1}{3}\frac{1}{2^{m+2}},$ then $\overline{D}(S_{j})\leq \frac{1}{2^{m+2}}$. Therefore, Proposition \ref{me} gives us that $x$ is a mean equicontinuity point.

\end{proof}

The proof of Lemma  \ref{punto-m-e-lemma} is very similar to the proof of Lemma \ref{punto-m-e}. 

\begin{lemma}\label{punto-m-e-lemma}

Let $m>0$, $w\in A^{m}$ and 
$$x:= \cdots \uno \ \cero^{2^{2}} \uno \ \cero^{2^{1}} \uno \ \cero^{2^{0}} .w \uno \ \cero^{2^{0}} \uno \ \cero^{2^{1}}\uno \ \cero^{2^{2}} \cdots .$$ 
We have that $x$ is a mean equicontinuity point.

\end{lemma}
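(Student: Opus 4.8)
The plan is to run the argument of Proposition~\ref{punto-m-e}, modified to accommodate the arbitrary block $w$, and to conclude via Proposition~\ref{me}: given $M\geq 0$ I must produce $M'\geq 0$ so that every $y\in B_{2^{-M'}}(x)$ has $\overline{D}(S_j)\leq 2^{-(M+2)}$ for all $0\leq j\leq M+1$. I take $M'$ large, using only that $y_{[-M',M']}=x_{[-M',M']}$ forces the window to contain $w$, a doubling--pattern door $D^{\dagger}$ of the right block lying to the right of $[-(M+1),M+1]$, and a doubling--pattern door $\hat D$ on the right immediately preceded by a run of exactly $2^{M+4}$ empty cells; that the left block has a door at some position in $[-M',-(M+2)]$; and that $M'$ is large compared with the finite time constants appearing below.

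\textbf{The new features.} Compared with Proposition~\ref{punto-m-e} there are two differences: the coordinates $0\leq j\leq M+1$ may now lie inside $w$, and the orbit of $x$ need not be constant because $w$ may contain ghosts and pacmans. I would handle the second point first. The block $w$ carries at most $m$ particles, and by the local rule a left--moving ghost meeting a door becomes a right--moving pacman, a right--moving pacman meeting a door becomes a left--moving keymaster, and a left--moving keymaster meeting a door crosses it and re--emerges on the left as a ghost; after each door a particle crosses leftward its later rightward excursions become strictly shorter, and a right--moving particle arriving from far to the left is converted to a keymaster at the first door it meets and thereby reflected before entering the window. Hence every particle of $w$ eventually leaves every bounded window to the left; in particular there is a finite time $N_{\star}$, depending only on $x$ and $M$, after which $(T^i x)$ restricted to $[-(M+1),M+1]$ equals a fixed word $c\in\{\cero,\uno\}^{2M+3}$, and --- taking $M'$ large enough that the particles of the right tail $y_{[M'+1,\infty)}$ have not reached the window by time $N_{\star}$ --- every particle that ever occupies a cell of $[-(M+1),M+1]$ along the orbit of $y$ after time $N_{\star}$ has come from that right tail (nothing enters from the left tail, by the reflection remark). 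Since finite sets do not affect $\overline{D}$, it suffices to bound, for each cell $p$ with $-(M+1)\leq p\leq M+1$, the upper density of $\{i:(T^iy)_p\neq c_p\}$.

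\textbf{The two cases.} Suppose $p$ is a door cell (a door of a doubling block, or a door occurring in $w$): then $(T^iy)_p\in\{\uno,\cinco{blue}\}$ and $c_p=\uno$, so I must bound the density of $\{i:(T^iy)_p=\cinco{blue}\}$. The key remark is that nothing ever crosses a door to the right and the update of a door cell depends only on that cell and its right neighbour, so the half--line $[p,\infty)$ evolves autonomously and $(T^iy)_p=(T^iy^{(p)})_0$ for the auxiliary configuration $y^{(p)}:=\ \infinito{\cero}\,.\,y_{[p,\infty)}$. In the orbit of $y^{(p)}$ the finitely many particles inherited from $w$ drift off to the left across door $p$, so after a finite time $t$ (again bounded in terms of $x$ and $M$ only) $T^{t}y^{(p)}$ has the form $\infinito{\cero}\,.\,w'\,v'$ with $w'=\uno\cero^{d_0}\uno\cdots\uno\cero^{d_n}\uno$ a word over $\{\cero,\uno\}$ ending at the door $\hat D$ and with $d_n=2^{M+4}$. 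Lemma~\ref{lemma2} then gives that consecutive $\cinco{blue}$--times of position $0$ of $T^{t}y^{(p)}$ differ by more than $2(2^{M+4}-1)$, and the nested--sequence estimate of Part~1 of Proposition~\ref{punto-m-e} yields $\overline{D}(\{i:(T^iy)_p=\cinco{blue}\})\leq\frac{1}{2(2^{M+4}-1)}$. Suppose instead $p$ is a space cell (so $c_p=\cero$): every particle that ever occupies $p$ has first passed through $D^{\dagger}$, occupies $p$ at most three times, and passes through $D^{\dagger}$ exactly once --- precisely the trajectory count of Lemma~\ref{lemma6} --- whence $\overline{D}(\{i:(T^iy)_p\neq\cero\})\leq 3\,\overline{D}(\{i:(T^iy)_{D^{\dagger}}=\cinco{blue}\})\leq\frac{3}{2(2^{M+4}-1)}$ by the door case. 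Adding the estimates for the cells $j$ and $-j$ gives $\overline{D}(S_j)\leq\frac{3}{2^{M+4}-1}\leq 2^{-(M+2)}$, which is the inequality demanded by Proposition~\ref{me}, so $x$ is a mean equicontinuity point.

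\textbf{Main obstacle.} I expect the real work to be the reduction in the second and third paragraphs: proving rigorously (by a careful case analysis of the local rule, or by formalising the trajectory function sketched in Lemma~\ref{lemma6}) that the particles of $w$ are expelled from every bounded window in finitely many steps --- both for the orbit of $x$ and for the orbit of each auxiliary point $y^{(p)}$ --- so that each relevant half--line eventually carries a configuration of the form $\infinito{\cero}\,.\,(\text{doors and spaces})\,v$ to which Lemmas~\ref{lemma2} and~\ref{lemma6} apply as stated. Once that reduction is in place, the density bookkeeping is verbatim that of Proposition~\ref{punto-m-e}.
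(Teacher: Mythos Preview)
Your proposal is correct and follows exactly the route the paper indicates. The paper's own proof is literally the single sentence ``The proof of Lemma~\ref{punto-m-e-lemma} is very similar to the proof of Proposition~\ref{punto-m-e}'', so there is nothing further to compare: you have supplied the adaptation the paper leaves implicit, correctly isolating the one new complication (the finitely many particles carried by $w$) and reducing it to the situation of Proposition~\ref{punto-m-e} by showing those particles are eventually expelled from every bounded window, after which Lemmas~\ref{lemma2} and~\ref{lemma6} apply verbatim via your half-line autonomy observation for door cells and the trajectory count for space cells.
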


\begin{theorem}
\label{teorema1}
 $(A^{\ZZ},T)$ has no equicontinuity points (hence is not almost equicontinuous). However, it is almost mean equicontinuous.
\end{theorem}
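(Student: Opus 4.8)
The plan is to prove the two assertions separately. That $EQ=\emptyset$ is driven by Proposition~\ref{finiteword}; that $(A^{\ZZ},T)$ is almost mean equicontinuous follows from Lemma~\ref{punto-m-e-lemma} by producing a dense set of mean equicontinuity points and then invoking Proposition~\ref{pro sigma}.

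\emph{No equicontinuity points.} Assume for contradiction that $z\in EQ$. Unwinding the definition with $\e=1$ (note that $d(T^{n}u,T^{n}v)<1$ is equivalent to $(T^{n}u)_{0}=(T^{n}v)_{0}$) produces $k\geq 0$ such that every $y$ with $y_{[-k,k]}=z_{[-k,k]}$ satisfies $(T^{n}y)_{0}=(T^{n}z)_{0}$ for all $n\geq 0$. Put $w:=z_{[-k,k]}$ and apply Proposition~\ref{finiteword} to it, obtaining $\hat x=\infinito\cero.w\cero^{\infty}$ and $\hat y=\infinito\cero.w\uno\,\key{blue}^{\infty}$, both carrying $w$ at positions $[0,2k]$, with $\{i:(T^{i}\hat x)_{0}\neq(T^{i}\hat y)_{0}\}$ infinite. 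Since $\sigma^{k}\hat x$ and $\sigma^{k}\hat y$ agree with $z$ on $[-k,k]$ and $\sigma\circ T=T\circ\sigma$, the equicontinuity hypothesis forces $(T^{n}\hat x)_{k}=(T^{n}z)_{0}=(T^{n}\hat y)_{k}$ for all $n$. On the other hand, every particle trajectory visits each site of $[0,2k]$ at most three times (as in the proof of Lemma~\ref{lemma6}), so after finitely many steps all of the finitely many particles of $\hat x$ have left $[0,2k]$ into $\cero^{\infty}$, never to return (no doors lie outside $w$ in $\hat x$); hence there is $N_{0}$ with $(T^{n}\hat x)_{i}\in\{\cero,\uno\}$ for all $i\in[0,2k]$ and $n\geq N_{0}$. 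In particular $(T^{n}\hat y)_{k}\in\{\cero,\uno\}$ for all $n\geq N_{0}$, i.e.\ position $k$ of $\hat y$ carries no particle after time $N_{0}$. But the left half-line of $\hat y$ is empty, so whenever $(T^{i}\hat x)_{0}\neq(T^{i}\hat y)_{0}$ with $i\geq N_{0}$ we must have $(T^{i}\hat y)_{0}\notin\{\cero,\uno\}$; following that particle of $\hat y$ backwards, it reached position $0$ from the right, hence crossed position $k$, and by the three-visits bound it sat at position $k$ at some time in $[\,i-3(2k+1),\,i\,]$. Picking $i$ in the infinite discrepancy set with $i\geq N_{0}+3(2k+1)$ places a particle at position $k$ of $\hat y$ at a time $\geq N_{0}$, a contradiction. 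Hence $EQ=\emptyset$, and in particular $(A^{\ZZ},T)$ is not almost equicontinuous.

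\emph{Almost mean equicontinuity.} By Lemma~\ref{punto-m-e-lemma}, every configuration of the form $\cdots\uno\,\cero^{2^{2}}\uno\,\cero^{2^{1}}\uno\,\cero^{2^{0}}.\,w\,\uno\,\cero^{2^{0}}\uno\,\cero^{2^{1}}\uno\,\cero^{2^{2}}\cdots$, with $w$ a finite word, is a mean equicontinuity point; and since $\sigma$ is a self-conjugacy of $(A^{\ZZ},T)$, so is every shift of such a configuration. Given an arbitrary $c\in A^{\ZZ}$ and $N\geq 0$, take $w:=c_{[-N,N]}$, let $x$ be the corresponding configuration, and observe that $\sigma^{N}x\in EQ^{M}$ agrees with $c$ on $[-N,N]$. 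Hence $EQ^{M}$ is dense, and by Proposition~\ref{pro sigma} it is residual; that is, $(A^{\ZZ},T)$ is almost mean equicontinuous.

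I expect the main obstacle to be the coordinate bookkeeping in the first part: Proposition~\ref{finiteword} produces a discrepancy at the \emph{left end} of the occurrence of $w$, whereas the equicontinuity of $z$ controls the coordinate at the \emph{centre} of that occurrence, so the propagation/particle-escape step is genuinely needed to carry the discrepancy over to the controlled coordinate. The one place to be careful is that the ``clean'' configuration $\infinito\cero.w\cero^{\infty}$ really does become eventually particle-free on $[0,2k]$, which hinges on the three-visits-per-site bound and on a keymaster ghost passing \emph{through} a door rather than being reflected.
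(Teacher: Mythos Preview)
Your proof is correct and follows the paper's approach, though you are considerably more careful than the paper in both halves.

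For almost mean equicontinuity, your argument matches the paper's: exhibit mean equicontinuity points of the Lemma~\ref{punto-m-e-lemma} form inside every cylinder. You correctly shift so that the central block $c_{[-N,N]}$ lands around the origin and invoke Proposition~\ref{pro sigma} to pass from density to residuality; the paper's version takes $w=x_{[0,m]}$ and asserts the conclusion directly, which strictly speaking needs the same shift you supply.

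For $EQ=\emptyset$, the paper simply writes ``follows immediately from Proposition~\ref{finiteword}''. You are right that there is a coordinate mismatch to address: Proposition~\ref{finiteword} gives the discrepancy at the \emph{left endpoint} of the word, while equicontinuity of $z$ controls the coordinate at the \emph{centre}. Your fix via particle tracking is correct: $\hat x$ is eventually particle-free on $[0,2k]$ (finitely many particles, each visiting each site at most three times, and none returns from the empty tails), so $(T^{n}\hat y)_{k}=(T^{n}\hat x)_{k}\in\{\cero,\uno\}$ for $n\geq N_{0}$; any later particle at position~$0$ of $\hat y$ must, by the three-visits bound, have been outside $[0,k]$ within the last $3(k+1)$ steps, and since the negative half-line of $\hat y$ carries no doors (hence no pacmans, hence a ghost there moves monotonically left and never returns), that excursion was to the right and the particle passed through position~$k$ after time $N_{0}$, a contradiction. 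The only steps you leave implicit are (i) that particles leaving into ${}^{\infty}\cero$ cannot return, and (ii) how the three-visits bound yields the explicit time window; both are straightforward to fill in as above.

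A shorter alternative, which is presumably what the paper has in mind, is to note that the construction in the proof of Proposition~\ref{finiteword} actually yields infinitely many discrepancies at \emph{every} coordinate $j\in[0,m-1]$ (the keymaster ghosts from the right pass through every position on their way left), so one may take the discrepancy directly at the centre coordinate and avoid the particle-chasing altogether.
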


\begin{proof}
The first statement follows immediately from Proposition \ref{finiteword}.\\

\noindent Now, let $x\in A^{\ZZ}$, $m\geq 0$, and $w=x_{[0,m]}$. We set $$y:=\cdots \uno \ \cero^{2^{2}} \uno \ \cero^{2^{1}} \uno \ \cero^{2^{0}} .w \uno \ \cero^{2^{0}} \uno \ \cero^{2^{1}}\uno \ \cero^{2^{2}} \cdots .$$ From Lemma \ref{punto-m-e-lemma}, we conclude that $y$ is a mean equicontinuity point. Therefore, $(A^{\ZZ},T)$ is almost mean equicontinuous.  
\end{proof}

\section{Example 2: The Pacman level 2 CA}
	
Let $A=\{ \cero, \uno, \ghost{blue}, \key{blue}, \pacman{yellow}, \cinco{blue} \}$, $A_{2}=\{ \cero, \cherry , \banana \}$ and $T:A^{\ZZ}\rightarrow A^{\ZZ}$ the Pacman CA of Section 3. We define $T_{2}:A_{2}^{\ZZ}\rightarrow A_{2}^{\ZZ}$ as
\begin{displaymath}
\begin{array}{rcl}
T_{2}x_{i} & = & \left\lbrace \begin{array}{lcl}
\cero & if & x_{i}=\cero ; \\
\cherry  & if & x_{i}=\banana ;\\
\banana & if & x_{i}=\cherry.
\end{array}  
\right.  
\end{array}
\end{displaymath}

\noindent
Now we will define some sort of skew product. 
 We define $A_{P}:= A\times A_{2},$ 
and the map $T_{P}:A_{P}^\ZZ \rightarrow A_{P}^\ZZ$ as

\begin{displaymath}
\begin{array}{rcl}
T_{P}x_{i} & = &\left\lbrace \begin{array}{lcl}
(Tx_{i},T_{2}x_{i}) & if & x_{i}\notin \{ (\key{blue},\cherry),(\cinco{blue}, \cherry), (\ghost{blue}, \cherry) \} ;\\
(Tx_{i},\cherry) & if & x_{i}\in \{ (\key{blue},\cherry),(\cinco{blue}, \cherry),(\ghost{blue}, \cherry) \} . 
\end{array}

\right.  
\end{array}
\end{displaymath}

\begin{lemma}\label{palabra sin fantasmas}
Let $m>0$, $w\in A_{P}^{m}$, and $$x=\infinito(\cero,\cero).w(\uno, \cero)(\cero,\cero)^{\infty}.$$ Then, there exists $N>0$ such that for all $n\geq N$ and all $0\leq i\leq |w|$,
$$T_{P}^{n}x_{i}\in \{ (p,q): p\in \{ \cero,\uno \}\wedge q\in A_{2} \} .$$

\end{lemma}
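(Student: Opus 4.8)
The plan is to reduce the statement to Lemma \ref{lemma3} by noting that $T_{P}$ drives its first coordinate exactly by the Pacman CA $T$. Write $\pi_1\colon A_{P}^{\ZZ}\to A^{\ZZ}$ and $\pi_2\colon A_{P}^{\ZZ}\to A_{2}^{\ZZ}$ for the two coordinate projections, so that $x_i=(\pi_1(x)_i,\pi_2(x)_i)$ for every $x$ and $i$; for a finite word $w\in A_{P}^{m}$ let $\pi_1(w)\in A^{m}$ denote its letterwise first-coordinate projection. Reading the definition of $T_{P}$, in both branches the first component of $T_{P}x_i$ equals $T(\pi_1 x)_i$ (the corrective branch only overrides the $A_{2}$-component), so $\pi_1\circ T_{P}=T\circ\pi_1$, and hence $\pi_1(T_{P}^{n}x)=T^{n}(\pi_1 x)$ for all $n\geq 0$.

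Next I would compute the first-coordinate projection of the given point: from $x=\infinito(\cero,\cero).w(\uno,\cero)(\cero,\cero)^{\infty}$ with $w\in A_{P}^{m}$ we get $\pi_1(x)=\infinito\cero.\,\pi_1(w)\,\uno\,\cero^{\infty}$, which is precisely a point of the form treated by Lemma \ref{lemma3} (with its ``$w$'' taken to be $\pi_1(w)\in A^{m}$). That lemma provides an $N>0$ such that $T^{n}(\pi_1 x)_i\in\{\cero,\uno\}$ for all $n\geq N$ and all $i\geq 0$; in particular this holds for $0\leq i\leq |w|$.

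Finally I would assemble the pieces: for $n\geq N$ and $0\leq i\leq |w|$ we have $T_{P}^{n}x_i=\bigl(T^{n}(\pi_1 x)_i,\ \pi_2(T_{P}^{n}x)_i\bigr)$, whose first entry lies in $\{\cero,\uno\}$ by the previous paragraph and whose second entry lies in $A_{2}$ by definition; thus $T_{P}^{n}x_i\in\{(p,q):p\in\{\cero,\uno\}\wedge q\in A_{2}\}$, which is the claim. I do not anticipate any genuine difficulty here: the only point that needs attention is verifying from the displayed definition of $T_{P}$ that the correction term affects only the $A_{2}$-coordinate, after which the result is an immediate instance of Lemma \ref{lemma3}.
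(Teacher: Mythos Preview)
Your proof is correct and follows the same approach as the paper. The paper's own proof is the single sentence ``This proof follows immediately from Lemma \ref{lemma3},'' and you have simply unpacked why: the first coordinate of $T_{P}$ is governed exactly by $T$ (the correction branch only touches the $A_{2}$-component), so projecting $x$ onto $A^{\ZZ}$ places you squarely in the hypotheses of Lemma \ref{lemma3}.
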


\begin{proof}
This proof follows immediately from Lemma \ref{lemma3}.
\end{proof}

We want to show that $(A_{P}^\ZZ,T_{P})$ is not almost mean equicontinuous. Using Proposition \ref{pro sigma}, we need to find a non-empty open set that does not contain any mean equicontinuity points.

\begin{lemma}\label{palabra S finito}
Let $m>0$ and $w\in A_{P}^{m}$ such that $w_{0}=(\uno,\cherry)$. Then, there exist $x,y\in A_{P}^{\ZZ}$ such that $$x_{[0,|w|-1]}=y_{[0,|w|-1]}=w$$ and the set $$\ZZ_{n\geq 0}\setminus \{n\in \ZZ_{n\geq 0} : T_{P}^{n}x_{0}\neq T_{P}^{n}y_{0} \}$$ is finite. 
\end{lemma}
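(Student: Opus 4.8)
The plan is to take $x$ and $y$ agreeing with $w$ on $[0,|w|-1]$ and differing, away from $w$, only in the position of a single keymaster ghost, chosen to sit at cells of \emph{opposite parity} in the two points; the difference will reach coordinate $0$ only through the ``freezing'' clause in the definition of $T_{P}$ (which forces a $\cherry$ under a ghost), and it will lock the $\cherry/\banana$ oscillations at coordinate $0$ into opposite phases. Concretely, for a large integer $K$ I would set
\[
x:=\infinito{(\cero,\cero)}.\,w\,(\uno,\cero)\,(\cero,\cero)^{K}\,(\key{blue},\cero)\,(\cero,\cero)^{\infty},\qquad
y:=\infinito{(\cero,\cero)}.\,w\,(\uno,\cero)\,(\cero,\cero)^{K+1}\,(\key{blue},\cero)\,(\cero,\cero)^{\infty}.
\]
Then $x_{[0,|w|-1]}=w=y_{[0,|w|-1]}$, and $x$ and $y$ agree on every coordinate $\leq|w|$.

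I would first describe the orbit of the $0$th coordinate of $x$; that of $y$ is identical up to a one-step delay. Applying Lemma \ref{palabra sin fantasmas} (which rests on Lemma \ref{lemma3}) to the ``quiet'' point $\infinito{(\cero,\cero)}.w(\uno,\cero)(\cero,\cero)^{\infty}$, from some time $N$ on its coordinates $\geq 0$ form a fixed pattern $u$ of empty cells $\cero$ and doors $\uno$, with $u_{0}=\uno$, which is left invariant by $T$; for $K$ large, $x$ coincides with this quiet point on all coordinates $\leq|w|$ until well after time $N$, so the first entry of $T_{P}^{n}x_{0}$ is $\uno$ on a long initial window, and hence (since $(\uno,\cdot)$ never triggers the freezing clause and $T_{2}$ swaps $\cherry$ and $\banana$) the second entry there merely oscillates. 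Meanwhile the lone injected keymaster ghost, being far to the right, marches left just as in the proof of Lemma \ref{palabrafinita01}; once $u$ has settled it enters the rightmost door of $u$ (the appended $\uno$), becomes $\cinco{blue}$, loses its key, and reappears one step later to the left as a plain ghost $\ghost{blue}$. That ghost then works its way leftward through the finitely many doors of $u$ via the cycle: a ghost meeting a door becomes a pacman, a pacman meeting the next door becomes a keymaster ghost, a keymaster ghost meeting a door becomes a door-with-ghost and then reappears as a plain ghost further left. When this process reaches the door at coordinate $0$ it makes the first entry of $T_{P}^{n}x_{0}$ equal to $\cinco{blue}$ at exactly one time $n^{*}_{x}$ and equal to $\uno$ at every time $>n^{*}_{x}$; and since shifting the ghost one cell to the right simply delays the whole journey by one unit of time, $n^{*}_{y}=n^{*}_{x}+1$.

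Granting this, the conclusion is immediate. For $n\leq n^{*}_{x}$ the second entry of $T_{P}^{n}x_{0}$ equals that of $T_{P}^{n}y_{0}$ and oscillates; let $c^{*}$ be its common value at time $n^{*}_{x}$. At time $n^{*}_{x}$ the $0$th cell of $x$ is $(\cinco{blue},c^{*})$, and whether or not this lies in the frozen set $\{(\key{blue},\cherry),(\cinco{blue},\cherry),(\ghost{blue},\cherry)\}$, one step of $T_{P}$ yields second entry $\cherry$; thus the second entry of $T_{P}^{n^{*}_{x}+1}x_{0}$ is $\cherry$, and since the first entry of $T_{P}^{n}x_{0}$ is $\uno$ for all $n>n^{*}_{x}$, from then on the second entry oscillates and equals $\cherry$ exactly when $n\not\equiv n^{*}_{x}\pmod 2$. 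The same argument for $y$, with $n^{*}_{y}=n^{*}_{x}+1$, gives that for $n\geq n^{*}_{x}+2$ the $0$th cell of $y$ has first entry $\uno$ and second entry equal to $\cherry$ exactly when $n\equiv n^{*}_{x}\pmod 2$. Hence $T_{P}^{n}x_{0}\neq T_{P}^{n}y_{0}$ for every $n\geq n^{*}_{x}+2$, so $\ZZ_{\geq 0}\setminus\{n:T_{P}^{n}x_{0}\neq T_{P}^{n}y_{0}\}\subseteq\{0,\dots,n^{*}_{x}+1\}$ is finite.

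The one genuinely delicate step is the tracking of the injected ghost in the second paragraph: one must check, \emph{uniformly in the arbitrary block} $w$, that its descendant is never annihilated, that it passes through every door of the stabilised configuration $u$, and that it occupies coordinate $0$ for exactly one step before leaving it in the state $\uno$ forever. This is a finite but fiddly verification of the local rule of $T$ — shepherding a single particle through the ghost/pacman/keymaster/door-with-ghost cycle at each door — of the same flavour as the orbit computations in Lemmas \ref{palabrafinita01}, \ref{trayectoria} and \ref{lemma6}.
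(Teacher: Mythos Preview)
Your argument is correct and close in spirit to the paper's, but with a genuine twist worth noting. The paper compares a single-ghost point $x={}^\infty(\cero,\cero).w(\uno,\cero)(\key{blue},\cero)(\cero,\cero)^\infty$ against the ghost-free point $y={}^\infty(\cero,\cero).w(\uno,\cero)(\cero,\cero)^\infty$; after a WLOG reduction (via Lemma~\ref{palabra sin fantasmas}) to $w$ with first coordinates in $\{\cero,\uno\}$, it lets the injected ghost hit position~$0$ as $\cinco{blue}$ at some time $N$, then splits into two cases according to whether $T_P^Nx_0=(\cinco{blue},\cherry)$ or $(\cinco{blue},\banana)$, and in the second case redefines $x$ with an extra $(\cero,\cero)$ to shift the parity. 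You instead compare two ghost points at adjacent positions $K$ and $K+1$, and your observation that a $\cinco{blue}$ at position~$0$ forces the \emph{next} second entry to be $\cherry$ regardless of $c^*$ (freezing if $c^*=\cherry$, ordinary flip if $c^*=\banana$) is exactly what collapses the paper's case split. Taking $K$ large so the injected particle arrives only after the $[0,|w|]$-window has stabilised is also cleaner than the paper's ``WLOG $w$ has no particles'' reduction: it proves the lemma for the \emph{given} $w$ directly.

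Your acknowledged delicate step---tracking the lone particle through the stabilised door pattern $u$ and checking it hits position~$0$ as $\cinco{blue}$ exactly once before departing left forever---is indeed the substantive content, and your description of the $\ghost{blue}\to\pacman{yellow}\to\key{blue}\to\cinco{blue}\to\ghost{blue}$ cycle at each door is accurate (it is the same mechanism as Lemma~\ref{palabrafinita01}). One small wording issue: in the interval $[0,N)$ before stabilisation the first entry of $T_P^n x_0$ need not be $\uno$ (ghosts from $w$ may cross), so the second entry need not ``merely oscillate'' there; but this is harmless for your argument since $x$ and $y$ agree on $[0,|w|]$ throughout that window, hence $T_P^nx_0=T_P^ny_0$ for all such $n$ regardless.
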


\begin{proof}
Let $w\in A_{P}^{m}$ as in the hypothesis of the lemma.  Let us define $$x:=\infinito(\cero,\cero).w(\uno , \cero )(\key{blue} , \cero)(\cero, \cero)^{\infty}$$ and $$y:= \infinito(\cero,\cero).w(\uno , \cero )(\cero, \cero)^{\infty}.$$ Using Lemma \ref{palabra sin fantasmas} we can assume, without loss of generality, that $w_{i}\in \{ (p,q) : p\in \{ \cero, \uno \} \wedge q\in A_{2} \}$. Now, there exists $N>0$ such that $T_{P}^{N}x_{0}=(\cinco{blue}, q)$, where $q\in \{ \cherry,\banana \}$. Meanwhile, for all $i\geq 0$, we have that $T_P^{i}y_{0}=(\uno, q)$ with $q\in \{ \cherry,\banana \}$. We have two cases to prove.
\begin{itemize}
\item[ \ ] Case 1: $T_{P}^{N}x_{0}=(\cinco{blue}, \cherry)$. 

This implies that $T_{P}^{N+1}x_{0}=(\uno, \cherry )$. Meanwhile,  $T_{P}^{N+1}y_{0}=(\uno , \banana )$. Therefore, we can easily see that $T_{P}^{N+i}x_{0}\neq T_{P}^{N+i}y_{0}$, for all $i>0$.

\item[ \ ] Case 2: $T_{P}^{N}x=(\cinco{blue}, \banana)$.

 Again we have that $T_{P}^{N+1}x_{0}=(\uno, \cherry )$. So, $T_{P}^{N+i}x_{0}= T_{P}^{N+i}y_{0}$ for all $i\geq 0$. In this case we redefine 
$$x:=\infinito(\cero,\cero).w(\uno , \cero )(\cero,\cero)(\key{blue} , \cero)(\cero, \cero)^{\infty}$$
 and finish the proof with a similar argument as the one given in Case 1.  
\end{itemize} 
\end{proof}

\begin{lemma}\label{punto no m.e.}
Let $x\in A_{P}^{\ZZ}$ such that $x_{0}=(\uno, \cherry)$. Then, $x$ is not a mean equicontinuity point.  

\end{lemma}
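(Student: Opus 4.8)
The plan is to show that the point $x$ with $x_0=(\uno,\cherry)$ fails the characterization of mean equicontinuity supplied by Proposition~\ref{me}, applied to the CA $(A_P^{\ZZ}, T_P)$. Concretely, I would fix $m=0$ (so that it suffices to control the coordinates $j=0$ and $j=-1$, with density threshold $\frac{1}{4}$) and argue that no matter how large $m'$ is chosen, there is a point $y\in B_{2^{-m'}}(x)$ for which the set $S_0=\{ i : T_P^i x_0 \neq T_P^i y_0 \ \vee\ T_P^i x_{0}\neq T_P^i y_0\}$ (i.e.\ the $j=0$ coordinate) has upper density larger than $\frac14$. In fact I expect to get upper density equal to $1$ at coordinate $0$, which kills mean equicontinuity directly and with room to spare.

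The key construction is exactly the one from Lemma~\ref{palabra S finito}: given the window $w=x_{[-m',m']}$, which has $w_0=(\uno,\cherry)$ at the centre, build $y$ by agreeing with $x$ on this window and then, just to the right of the window, planting a keymaster ghost $(\key{blue},\cero)$ against a fresh door, with empty space (both in the $A$-layer and in the $A_2$-layer) everywhere further out; the comparison point is the same window followed by $(\uno,\cero)(\cero,\cero)^{\infty}$. First I would invoke Lemma~\ref{palabra sin fantasmas} to assume, after possibly waiting a bounded time, that the $A$-coordinates of the window have settled into $\{\cero,\uno\}$, so that the only ``active'' particle on the relevant part of the configuration of $y$ is the implanted keymaster ghost. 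Then, by Lemma~\ref{cota1} (or the trajectory/crossing analysis behind it), this keymaster ghost will eventually reach coordinate $0$, cross the door there, and in doing so $T_P^N x_0$ will be of the form $(\cinco{blue},q)$ for some $N$, while $T_P^i y_0 = (\uno, q')$ for all $i$. The crucial point — and this is where the $A_2$-layer does its job — is that once a discrepancy in the $A_2$-component is created at coordinate $0$, it is \emph{permanent}: the map $T_2$ just swaps $\cherry\leftrightarrow\banana$ with period $2$, the special clause in the definition of $T_P$ only freezes $\cherry$ in place of $\banana$ at a site carrying a ghost/keymaster/door-with-ghost in the $A$-layer, and after the door has been crossed the $A$-layer at coordinate $0$ is permanently in $\{\cero,\uno\}$, so the two $A_2$-clocks at coordinate $0$ run freely with opposite phases forever. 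Hence $T_P^{i}x_0\neq T_P^{i}y_0$ for all sufficiently large $i$, so $\overline{D}(S_0)=1$.

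The main obstacle is the phase bookkeeping in the $A_2$-layer: one must check that crossing the door genuinely toggles the parity of the $\cherry/\banana$ clock at coordinate $0$ of $x$ relative to $y$, and that the ``frozen-$\cherry$'' exceptional rule doesn't accidentally re-synchronize the two clocks. This is precisely the content of the two-case analysis (Case~1 vs.\ Case~2, according to whether $T_P^N x_0=(\cinco{blue},\cherry)$ or $(\cinco{blue},\banana)$) in Lemma~\ref{palabra S finito}: in the bad-parity case one shifts the implanted keymaster ghost one cell further to the right (inserting one extra $(\cero,\cero)$), which delays the door-crossing by exactly one time step and thus flips the parity. I would reuse that argument verbatim here, noting only that for the present lemma I need the complementary conclusion — that the discrepancy set is cofinite rather than finite — which is simply the other branch of the same dichotomy. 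Everything else (openness of the relevant cylinder, the reduction $m=0$, the density estimate $1>\frac14$) is routine. Therefore $x$ is not a mean equicontinuity point. \proofqed
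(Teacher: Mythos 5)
Your mechanism is exactly the paper's: the paper proves this lemma by citing Lemma~\ref{palabra S finito}, whose construction (a single keymaster ghost planted behind a fresh door to the right of the settled window, with the one-cell shift to fix the $\cherry/\banana$ parity) you reproduce, and centering the window at the $(\uno,\cherry)$ symbol is the right adaptation. The gap is in the bridge from that construction to the non-mean-equicontinuity of the \emph{arbitrary} point $x$. The construction yields \emph{two auxiliary points}, both agreeing with $x$ on $[-m',m']$, whose orbits differ from \emph{each other} at coordinate $0$ cofinitely; it does not yield a single $y\in B_{2^{-m'}}(x)$ with $\overline{D}(\{i: T_P^i x_0\neq T_P^i y_0\})=1$, which is what your invocation of Proposition~\ref{me} (whose sets $S_j$ compare $x$ itself with $y$) requires. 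Your write-up in fact slides between the two readings: you declare that $y$ carries the implanted ghost, then assert $T_P^N x_0=(\cinco{blue},q)$ while $T_P^i y_0=(\uno,q')$, and conclude that ``the two clocks at coordinate $0$ run with opposite phases forever.'' But $x$ is arbitrary: its right tail beyond the window is unknown, so its door at coordinate $0$ may be occupied (and its $\cherry/\banana$ clock frozen) at infinitely many uncontrolled times; no permanent phase opposition, and no density-$1$ discrepancy against $x$ itself, can be extracted from planting one ghost in $y$.

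The repair is short and is what the paper's ``follows immediately'' implicitly uses. Either pass to the two-point sets of Proposition~\ref{pro sigma}: the two constructed points $u,v$ lie in $B_{2^{-m'}}(x)$, they differ at coordinate $0$ for all but finitely many times, hence $\limsup_n \frac{1}{n+1}\sum_{i=0}^n d(T_P^i u,T_P^i v)=1$, so $x\notin EQ^M_{\e}$ for any $\e<1$ and thus $x\notin EQ^M$. Or, if you want to stay with Proposition~\ref{me}, add a pigeonhole step: for each $i$ with $T_P^i u_0\neq T_P^i v_0$, at least one of $u,v$ disagrees with $x$ at coordinate $0$ at time $i$, so at least one of the two (you need not know which) satisfies $\overline{D}(S_0)\geq \tfrac12>\tfrac14$, which is what your choice $m=0$ needs. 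Finally, a small misreading: Lemma~\ref{palabra S finito} already asserts that the \emph{complement} of the discrepancy set is finite, i.e.\ the discrepancy set is cofinite, so there is no ``other branch of a dichotomy'' to appeal to --- its conclusion can be used as stated.
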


\begin{proof}
This lemma follows immediately from Lemma \ref{palabra S finito}. 
\end{proof}

Notice that for all $\e>0$, any $y\in B_{\e}(x)$, where $x_{0}=(\uno, \cherry)$, is not a mean  equicontinuity point.

\begin{theorem}
	\label{teorema 2}
$(A_{P}^{\ZZ}, T_{P})$ is neither mean sensitive and nor almost mean equicontinuous.
\end{theorem}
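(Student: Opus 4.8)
The plan is to prove the two assertions separately: that $(A_{P}^{\ZZ},T_{P})$ has at least one mean equicontinuity point (which rules out mean sensitivity) and that its mean equicontinuity points miss a non-empty open set (which rules out almost mean equicontinuity).

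For the first assertion, let $x\in A^{\ZZ}$ be the configuration of Proposition~\ref{punto-m-e}, which is a mean equicontinuity point of the Pacman CA $(A^{\ZZ},T)$, and let $\tilde{x}\in A_{P}^{\ZZ}$ be the point whose first track is $x$ and whose second track is constantly $\cero$. I would show $\tilde{x}\in EQ^{M}$, using two structural facts about $T_{P}$. First, writing $\pi\colon A_{P}^{\ZZ}\to A^{\ZZ}$ for the projection onto the first coordinate, the definition of $T_{P}$ gives $\pi\circ T_{P}=T\circ\pi$, so the first track of any orbit evolves exactly under $T$. Second, a second-track cell carrying $\cero$ carries $\cero$ at all later times: if $z_{j}=\cero$ then $(y_{j},\cero)$ lies outside the freeze set $\{(\key{blue},\cherry),(\cinco{blue},\cherry),(\ghost{blue},\cherry)\}$, so that cell evolves by $T_{2}$, and $T_{2}(\cero)=\cero$; induction on time closes it. Consequently, if $(y,z)$ agrees with $\tilde{x}$ on $[-L,L]$ (so that $z_{j}=\cero$ for $|j|\le L$), then $T_{P}^{i}\tilde{x}$ and $T_{P}^{i}(y,z)$ have identical second tracks on $[-L,L]$ for every $i$, so the first coordinate at which they differ, if it falls in $[-L,L]$, is one at which $T^{i}x$ and $T^{i}y$ differ; hence $d(T_{P}^{i}\tilde{x},T_{P}^{i}(y,z))\le\max\{d(T^{i}x,T^{i}y),\,2^{-(L+1)}\}$, where on the right $d$ is the metric on $A^{\ZZ}$. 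Averaging over $i\le n$ and taking $\limsup$ gives $\limsup_{n}\frac{1}{n+1}\sum_{i=0}^{n}d(T_{P}^{i}\tilde{x},T_{P}^{i}(y,z))\le\limsup_{n}\frac{1}{n+1}\sum_{i=0}^{n}d(T^{i}x,T^{i}y)+2^{-(L+1)}$. Given $\e>0$, I would invoke the mean equicontinuity of $x$ in $(A^{\ZZ},T)$ to choose $\delta_{0}>0$ so that $d(x,y)<\delta_{0}$ forces the first term to be $\le\e/2$, then choose $L$ with $2^{-L}\le\min\{\delta_{0},\e/2\}$ and set $\delta=2^{-L}$; every $(y,z)\in B_{\delta}(\tilde{x})$ then satisfies $\limsup_{n}\frac{1}{n+1}\sum_{i=0}^{n}d(T_{P}^{i}\tilde{x},T_{P}^{i}(y,z))\le\e$, so $\tilde{x}\in EQ^{M}$. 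If some $\e>0$ witnessed mean sensitivity, then taking $\delta$ as just obtained for $\e/2$ and applying the triangle inequality through $\tilde{x}$ would force every pair $u,v\in B_{\delta}(\tilde{x})$ to satisfy $\limsup_{n}\frac{1}{n+1}\sum_{i=0}^{n}d(T_{P}^{i}u,T_{P}^{i}v)\le\e/2+\e/2=\e$, contradicting that mean sensitivity requires a pair in $B_{\delta}(\tilde{x})$ with this quantity $>\e$. Hence $(A_{P}^{\ZZ},T_{P})$ is not mean sensitive.

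For the second assertion, the cylinder $U:=\{\,z\in A_{P}^{\ZZ}:z_{0}=(\uno,\cherry)\,\}$ is non-empty and open, and by Lemma~\ref{punto no m.e.} no point of $U$ is a mean equicontinuity point; thus $EQ^{M}$ is not dense. By the last sentence of Proposition~\ref{pro sigma} (or simply because a residual subset of the compact metric space $A_{P}^{\ZZ}$ is dense), $EQ^{M}$ is not residual, i.e.\ $(A_{P}^{\ZZ},T_{P})$ is not almost mean equicontinuous.

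I expect the only delicate point to be the estimate in the first assertion: one has to check exactly which coordinate realises the product-space distance and confirm that the inert second track contributes nothing on the central window, so that the mean equicontinuity of $x$ inside the Pacman CA transfers verbatim to $\tilde{x}$ inside the Pacman level~2 CA. Once the two structural facts—$\pi$-equivariance and persistence of $\cero$ on the second track—are recorded, the remaining steps are routine bookkeeping.
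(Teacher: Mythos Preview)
Your proof is correct and follows essentially the same route as the paper: exhibit a mean equicontinuity point (the lift of the point from Proposition~\ref{punto-m-e}) to rule out mean sensitivity, and invoke Lemma~\ref{punto no m.e.} together with Proposition~\ref{pro sigma} to rule out almost mean equicontinuity. The paper simply cites Proposition~\ref{punto-m-e-lemma} to assert that the lifted point is mean equicontinuous for $T_{P}$, whereas you supply the explicit transfer argument via $\pi$-equivariance and persistence of $\cero$ on the second track; this extra care is a genuine improvement, since the cited proposition is literally about $(A^{\ZZ},T)$ rather than $(A_{P}^{\ZZ},T_{P})$.
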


\begin{proof}
Let us show that $(A_{P}^{\ZZ}, T_{P})$ is not mean sensitive, this is, for every $\e>0$ there exists a open set $U\subset A_{P}^{\ZZ}$ such that for every $x,y\in U$ we have that 
$$\limsup_{n\rightarrow \infty} \frac{\sum_{i=0}^{n}d(T_{P}^{i}x,T_{P}^{i}y)}{n+1}< \e .$$
From the Proposition \ref{punto-m-e-lemma}, we have that the element 
$$x:=\cdots(\uno,\cero) ( \cero, \cero)^{2^{1}} (\uno,\cero) ( \cero, \cero)^{2^{0}}.(\uno,\cero) ( \cero, \cero)^{2^{0}} (\uno,\cero) ( \cero, \cero)^{2^{1}} \cdots$$ 
is a mean equicontinuity point. From Proposition \ref{pro sigma}, for every $\e>0$, there exists $\d>0$ such that  for all $y,z \in B_{\d}(x)$ we have that
$$\limsup_{n\rightarrow \infty} \frac{\sum_{i=0}^{n}d(T_{P}^{i}y,T_{P}^{i}z)}{n+1}< \e .$$
Therefore, $(A_{P}^{\ZZ}, T_{P})$ is not mean sensitive.

The fact that $(A_{P}^{\ZZ}, T_{P})$ is not almost mean equicontinuous  follows immediately from Lemma \ref{punto no m.e.}.  
\end{proof}

We finish the paper with a question. 
A minimal TDS is mean equicontinuous if and only if it is not mean sensitive \cite{lituye,weakforms}. Considering Proposition \ref{prop:mindich}, we ask. 

\begin{question}
	Does there exist a minimal subshift $(X,\sigma)$ and a CA $(X,T)$ that is neither mean equicontinuous nor mean sensitive?
\end{question}

\bibliographystyle{plain}
\bibliography{ref}

\medskip

\begin{itemize}
	\item \emph{L. de los Santos Ba\~nos, Instituto de F\'isica, Universidad Aut\'onoma de San Luis Potos\'i,  luguis.sb.25@gmail.com}
	\item \emph{F. Garc\'ia-Ramos, CONACyT \& Instituto de F\'isica, Universidad Aut\'onoma de San Luis Potos\'i, fgramos@conacyt.mx}
\end{itemize}	
\end{document}